\newtheorem{theorem}{Theorem}
\newtheorem{proposition}{Proposition}
\newtheorem{lemma}{Lemma}
\newtheorem{assumption}{Assumption}
\def\R{\mathbb{R}}
\newcommand{\veco}[1]{\mathrm{vec}\left({#1}\right)}
\newcommand{\x}{\mathbf{x}}
\newcommand{\y}{\mathbf{y}}
\newcommand{\z}{\mathbf{z}}
\newcommand{\bb}[1]{\mathbf{#1}}
\newcommand{\tx}{\mathbf{\tilde{x}}}
\newcommand{\xp}{{\x}'}
\newcommand{\yp}{{\y}'}
\newcommand{\dtx}{{\tx}'}
\newcommand{\e}{\bb{e}}
\newcommand{\fine}[1][\Delta t]{\varphi^{{#1}}_F}
\newcommand{\coarse}[1][\Delta t]{\varphi^{{#1}}_C}
\newcommand{\ELM}[1]{\ELMhelper#1}
\def\ELMhelper#1,#2,#3,#4{
    \mathcal{N}_{#1}\left(#2;#4\right)
}
\newcommand{\dELM}[1]{\dELMhelper#1}
\def\dELMhelper#1,#2,#3,#4{
    \mathcal{N}_{#1}'\left(#2;#4\right)
}
\newcommand{\qr}[1]{\inthelper#1}
\def\inthelper#1,#2,#3{
\mathcal{I}_p\left(#1;#2,#3\right)
}
\newcommand{\tX}{\mathbf{\tilde{X}}}
\newcommand{\bfF}{\bb{F}}
\newcommand{\bfH}{\bb{H}}
\newcommand{\bfX}{\mathbf{X}}
\newcommand{\bfY}{\mathbf{Y}}
\newcommand{\D}{\mathbf{D}}
\newcommand{\dtX}{{\tX}'}
\newcommand{\bfa}{\bb{a}}
\newcommand{\bfb}{\bb{b}}
\newcommand{\bfd}{\mathbf{d}}
\newcommand{\F}{\mathcal{F}}
\newcommand{\DF}{\operatorname{D}\F}
\newcommand{\dd}[1]{\mathrm{d}{#1}}
\newcommand{\inv}{\left(\bfH'\right)^{-1}}
\begin{document}

\title[Article Title]{Parallel-in-Time Solutions with Random Projection Neural Networks}

\author[1]{Marta M.~Betcke}\email{m.betcke@ucl.ac.uk}

\author[2]{Lisa Maria Kreusser}\email{lmk54@bath.ac.uk}

\author[3]{\fnm{Davide} \sur{Murari}}\email{davide.murari@ntnu.no}

\affil[1]{\orgdiv{Department of Computer Science}, \orgname{University College London}}

\affil[2]{\orgdiv{Department of Mathematical Sciences}, \orgname{University of Bath}}

\affil[3]{\orgdiv{Department of Mathematical Sciences}, \orgname{Norwegian University of Science and Technology}}

\abstract{This paper considers one of the fundamental parallel-in-time methods for the solution of ordinary differential equations, Parareal, and extends it by adopting a neural network as a coarse propagator. We provide a theoretical analysis of the convergence properties of the proposed algorithm and show its effectiveness for several examples, including Lorenz and Burgers' equations. In our numerical simulations, we further specialize the underpinning neural architecture to Random Projection Neural Networks (RPNNs), a $2-$layer neural network where the first layer weights are drawn at random rather than optimized. This restriction substantially increases the efficiency of fitting RPNN's weights in comparison to a standard feedforward network without negatively impacting the accuracy, as demonstrated in the SIR system example.}

\keywords{Parareal, Physics Informed Neural Networks, Extreme Learning Machines, Random Projection Neural Networks, Mathematics of Deep Learning.}

\maketitle

\section{Introduction}\label{se:introduction}

In this paper, we consider initial value problems expressed as a system
of first-order ordinary differential equations (ODEs). This wide class of problems arises in many social and natural sciences applications, including semi-discretized, time-dependent partial differential equations. We express a generic system of such differential equations as
\begin{align}\label{eq:ode}
\begin{cases}
\xp\left(t\right) = \F\left(\x\left(t\right)\right)\in\mathbb{R}^d,\\
\x\left(0\right) = \x_0,
\end{cases}
\end{align}
which will be our reference problem. Here, $'$ denotes the derivative with respect to the time variable. To guarantee the existence and uniqueness of its solutions, we assume that $\F:\mathbb{R}^d\rightarrow\mathbb{R}^d$ is a Lipschitz-continuous vector field and $t\in [0,T]$ for some $T>0$. Solving an initial value problem like \eqref{eq:ode} analytically is generally not a possibility, and hence one needs to rely on numerical approximations to the solution curve $t\mapsto \x(t)$. Numerical techniques rely on introducing a time discretization $0<t_1<\cdots <t_N = T$ of the interval $[0,T]$, with steps $\Delta t_n = t_{n+1}-t_{n}$, and computing approximations $\x_n$ of the solution $\x(t_n)$ at the nodes $t_n$, i.e., $\x_n\approx \x(t_n)$. A popular and established option is provided by one-step methods, such as Runge--Kutta schemes, which relate $\x_{n+1}$ to $\x_n$ in terms of a map $\varphi^{\Delta t_{n}}_{\F}$ of the form $\x_{n+1}=\varphi^{\Delta t_{n}}_{\F}(\x_n)$. Collocation methods are a subset of Runge--Kutta methods \cite[Section II.7]{hairer1993solving} with particular relevance to this paper. These methods aim to approximate the solution on each interval $[t_n,t_{n+1}]$ with a real polynomial ${\tx}$ of a sufficiently high degree and coefficients in $\mathbb{R}^d$. The updated solution is then computed as $\x_{n+1}=\varphi^{\Delta t_{n}}_{\F}(\x_n)$ evaluating the polynomial at $t=t_{n+1}$ as $\x_{n+1}=\tx(t_{n+1})\approx \x(t_{n+1})$. To determine the coefficients of the polynomial $\tx(t)$, one needs to solve the system of algebraic equations $\dtx(t_{n,c})=\F(\tx(t_{n,c}))$ for a set of $C$ collocation points $t_n\leq t_{n,1}<t_{n,2}<\cdots <t_{n,C}\leq t_{n+1}$.

As initial value problems define causal processes, many time-stepping schemes are sequential by nature, in the sense that to compute $\x_{n+1}$, one has to compute $\x_n$ first. Nonetheless, multiple successful approaches such as Parareal \cite{LionsEtAl2001}, PFASST \cite{Emmett2012}, PARAEXP \cite{gander2013paraexp}, and MGRIT \cite{Falgout2014} have introduced some notion of parallel-in-time solution of initial value problems \eqref{eq:ode}, see for instance \cite{Gander2015} for an overview of existing methods. 

In this work, we build upon the Parareal algorithm \cite{LionsEtAl2001}. The speedup in Parareal is achieved by coupling a fine time step integrator with a coarse step integrator. In each iteration, the coarse integrator updates the initial conditions of initial value problems on time subintervals, which can be solved in parallel and only entail fine step time integration over a short time. The elegance and strong theoretical grounding of the idea (see \cite{gander2008nonlinear,Gander2007}, for instance) led to a number of variants of the Parareal algorithm, including combinations of Parareal with neural networks \cite{Lee2022,Ibrahim2023,jin2023learning}.

In recent years, solving differential equations with machine learning approaches gained in popularity; see, for instance, \cite{karniadakis2021a} for a review. For learned methods to become staple solvers, understanding their properties and ensuring they reproduce the qualitative behavior of the solutions is paramount.
The problem of convergence and generalization for neural network-based PDE solvers has been considered in \cite{mishra2022estimates, doumeche2023convergence, de2022error}, for instance. An analysis of the approximation properties of neural networks in the context of PDE solutions is provided in \cite{opschoor2020deep, kutyniok2022theoretical}. In the context of ODEs, there is an increasing interest in developing deep neural networks to learn time-stepping schemes unrestricted by constraints of the local Taylor series, including approaches based on flow maps \cite{Liu2022}, model order reduction \cite{REGAZZONI2019108852}, and spectral methods \cite{Lange2021}.

In the context of combining Parareal with neural networks, Parareal with a physics-informed neural network as a coarse propagator was suggested in \cite{Ibrahim2023}. In \cite{Lee2022}, the authors introduced a parallel (deep) neural network based on parallelizing the forward propagation following similar principles to those behind Parareal. In \cite{jin2023learning}, it was proposed to learn a coarse propagator by parameterizing its stability function and optimizing the associated weights to minimize an analytic convergence factor of the Parareal method for parabolic PDEs.

Neural networks are generally considered as a composition of parametric maps whose weights are all optimized so that a task of interest is solved with sufficient accuracy. The common choice of the optimization procedure is gradient-based algorithms, which start from a random set of initial weights and update them iteratively until the stopping criterion has been reached. A class of neural networks where some of the weights are not updated at all is often called Random Projection Neural Networks (RPNNs), sometimes also called Extreme Learning Machines (ELMs) \cite{huang2006extreme,huang2015trends,rahimi2008weighted}. Despite their seemingly reduced capability of approximating functions, these neural networks retain most of the approximation results of more conventional neural networks. For example, as derived in \cite[Theorem 2.1]{huang2006extreme}, RPNNs with two layers and $H$ hidden neurons, where only the last layer is optimized while all other weights are independently sampled from any interval according to any continuous probability distribution, can interpolate with probability one any set of $H$ distinct input-to-output pairs. Their expressivity properties, see e.g.\ \cite{Rahimi2008,rahimi2008weighted,gonon2023approximation}, make them suitable for the approximation of solutions of ODEs which were successfully considered in \cite{fabiani2023parsimonious,mortari2019high,schiassi2021physics,dwivedi2020physics,de2022physics}, yielding accurate approximations in a fraction of the training time when compared to more conventional networks.

\subsection{Contributions}
In this work, we build a hybrid numerical method based on the Parareal framework, where a RPNN constitutes the coarse time stepping scheme. We first derive an a-posteriori error estimate for general neural network-based solvers of ODEs. This theoretical result allows us to replace the coarse integrator of the Parareal method with a RPNN while preserving its convergence guarantees. The RPNNs are trained online during the Parareal iterations. There are several benefits to the proposed procedure. First, our hybrid approach comes with theoretical guarantees and allows us to solve a differential equation such that the produced solution is accurate to a certain degree. Additionally, using RPNNs rather than a more conventional neural network leads to a significant speedup in the algorithm without sacrificing its capabilities. Indeed, as we show for the SIR problem, using RPNNs leads to about half of the computational time of the other method, even without accounting for the offline training phase of the more conventional network. Further, we demonstrate the effectiveness of the proposed approach, together with the timings of the components of the algorithm, and apply it to several examples in Section \ref{se:experiments}.

\subsection{Outline}
The outline of the paper is as follows. We start with introducing the Parareal algorithm and its convergence properties in Section \ref{se:parallel}. Section \ref{se:theory} presents the theoretical derivation of an a-posteriori error estimate for neural network-based solvers. This result relies on a non-linear variation of the constants formula, also called the Gröbner-Alekseev Theorem. In Section \ref{se:elm}, we propose a hybrid algorithm combining the Parareal framework with the RPNN-based coarse propagator. We study the convergence properties of this hybrid algorithm in Section \ref{se:convergenceFixedPoint}. The effectiveness of the proposed method is tested in Section \ref{se:experiments} on the benchmark dynamical systems studied in \cite{gander2008nonlinear} with the addition of the SIR and ROBER problems. We conclude with the summary and analysis of the obtained results in Section \ref{se:conclusions}.

\section{Parareal method}\label{se:parallel}
This section introduces the Parareal algorithm \cite{LionsEtAl2001} and presents a convergence result needed for our derivations. 
\subsection{The method}
The Parareal algorithm builds on two one-step methods that we call $\fine,\coarse:\mathbb{R}^d\to\mathbb{R}^d$, denoting the fine and coarse integrators with timestep $\Delta t$, respectively. There are multiple options to design such maps, one being to use the same one-step method but with finer or coarser timesteps, e.g.,
\[
\fine := \coarse[\Delta t/M]\circ \cdots \circ\coarse[\Delta t/M] = \left(\coarse[\Delta t/M]\right)^M,\,\,M\in\mathbb{N}.
\]
This strategy motivates the subscripts of the two maps since these methods rely on a fine and a coarse mesh. Another option to define $\fine$ and $\coarse$ is to use methods of different orders, hence different levels of accuracy with the same timestep $\Delta t$. Regardless of how we define these two methods, the map $\fine$ is more expensive to evaluate than $\coarse$. The goal of the Parareal algorithm is to get an approximate solution $\{\x_n\}_{n=0}^N$ over the mesh $t_0=0<t_1<\cdots < t_N=T$, $\Delta t_n=t_{n+1}-t_n$, with the same degree of accuracy as the one obtained with $\fine[\Delta t_n]$ but in a shorter time. This is achieved by transforming \eqref{eq:ode} into a collection of initial value problems on a shorter time interval by using $\coarse[\Delta t_n]$. This zeroth iterate of the method consists of finding intermediate initial conditions $\x_{n}^0$ by integrating \eqref{eq:ode} with $\coarse[\Delta t_n]$ to get
\[
\x_0^0=\x_0,\,\,\x_{n+1}^0 = \coarse[\Delta t_n]\left(\x_n^0\right),\,\,n=0,\ldots,N-1,\,\,\Delta t_n = t_{n+1}-t_n,
\]
and define the $N$ initial value problems on the subintervals
\begin{align}\label{eq:para}
\begin{cases}
\xp\left(t\right) = \F\left(\x\left(t\right)\right), \\
\x\left(t_n\right) = \x_n^0,
\end{cases}\quad t\in \left[t_n,t_{n+1}\right],\,n=0,\ldots,N-1.
\end{align}
These problems can now be solved in parallel using the fine integrator $\fine[\Delta t_n]$, which constitutes the parallel step in all successive Parareal iterates. A predictor-corrector scheme is used to iteratively update the initial conditions on the subintervals $[t_n,t_{n+1}]$. Parareal iteration $i+1$ reads
\begin{align}\label{eq:correction}
\x_{n+1}^{i+1} = \fine[\Delta t_n]\left(\x_n^i\right) + \coarse[\Delta t_n]\left(\x_n^{i+1}\right) - \coarse[\Delta t_n]\left(\x_n^{i}\right),\,\,n=0,\ldots,N-1.
\end{align}
A common choice of a stopping criterion is $\max_{n=1,\ldots,N}\|\x_{n}^{i+1} - \x_n^i\|_2<\varepsilon$ for some tolerance $\varepsilon>0$. The parallel speedup is achieved if this criterion is met with far less iterates than the number of time intervals $N$.

\subsection{Interpretation of the correction term} 
Following \cite{gander2008nonlinear}, we provide the interpretation of \eqref{eq:correction} as an approximation of the Newton step for matching the exact flow at the time discretization points $t_0=0,\ldots,t_{N}=T$. We consider
\[
\mathcal{H}\left(\y\right) := \begin{bmatrix} 
\y_0 - \x_0\\
\y_1 - \phi^{\Delta t_{0}}_{\mathcal{F}}\left(\y_0\right) \\ 
\y_2 - \phi^{\Delta t_{1}}_{\mathcal{F}}\left(\y_1\right) \\ 
\vdots \\ 
\y_N - \phi^{\Delta t_{N-1}}_{\mathcal{F}}\left(\y_{N-1}\right) \end{bmatrix}=0,\quad \y=\begin{bmatrix}\y_0 \\ \y_1 \\ \vdots \\ \y_N \end{bmatrix}\in\mathbb{R}^{d\cdot (N+1)},
\]
where $\phi^{\Delta t}_{\mathcal{F}}(\x_{n})$ with $\x_n\in \mathbb R^d$ is the exact solution $\x(\Delta t)$ of the initial value problem
\[
\begin{cases}
\xp\left(t\right)=\F\left(\x\left(t\right)\right),\\
\x(0)=\x_n.
\end{cases}
\]
Linearizing $\mathcal H$ at the $i$th iterate, $\x^i$, equating it to 0 and solving for the $i+1$st iterate, $\x^{i+1}$, we arrive at the Newton update 
\[
\x_{n+1}^{i+1} = \phi^{\Delta t_{n}}_{\mathcal{F}}\left(\x_n^{i}\right) + \partial_{\x}\left(\phi^{\Delta t_{n}}_{\mathcal{F}}\right)\left(\x_n^i\right)\left(\x_n^{i+1}-\x_n^i\right), \quad n = 0,\dots,N-1
\]
for the solution of the system $\mathcal{H}(\y)=0$. The idea behind Parareal is then to approximate the unknown $\phi^{\Delta t_n}_{\mathcal{F}}(\x_n^i)$ with $\fine[\Delta t_n](\x_n^i)$, and the first order term with
\[
\partial_{\x}\left(\phi^{\Delta t_n}_{\mathcal{F}}\right)\left(\x_n^i\right)\left(\x_n^{i+1}-\x_n^i\right)\approx \coarse[\Delta t_n]\left(\x_n^{i+1}\right) - \coarse[\Delta t_n]\left(\x_n^{i}\right),
\]
which yields \eqref{eq:correction}.

\subsection{Convergence}
Convergence of the Parareal iterations was proven in \cite{gander2008nonlinear} under the assumption that the fine integrator $\fine[\Delta t]$ and the exact flow map $\phi_{\mathcal{F}}^{\Delta t}$ coincide.
\begin{theorem}[Theorem 1 in \cite{gander2008nonlinear}]\label{thm:convergenceParareal}
Let us consider the initial value problem \eqref{eq:ode} and partition the time interval $[0,T]$ into $N$ intervals of size $\Delta t = T/N$ using a grid of nodes $t_n=n\Delta t$. Assume that the fine integrator $\fine[\Delta t]$ coincides with the exact flow map $\phi_{\mathcal{F}}^{\Delta t}$, i.e.\ $\fine[\Delta t]=\phi_{\mathcal{F}}^{\Delta t}$. Furthermore, suppose that there exist $p\in \mathbb{N}$, a set of continuously differentiable functions $c_{p+1},c_{p+2},\cdots$, and $\alpha>0$ such that
\begin{align}\label{eq:cond1}
\begin{split}
\fine\left(\x\right)-\coarse\left(\x\right) &= c_{p+1}\left(\x\right)(\Delta t)^{p+1}+c_{p+2}\left(\x\right)(\Delta t)^{p+2}+\cdots, \quad \text{and}\\
\left\|\fine\left(\x\right)-\coarse\left(\x\right)\right\|_2&\leq \alpha(\Delta t)^{p+1}
\end{split}
\end{align}
for every $\x\in\mathbb{R}^d$, and also that there exists $\beta>0$ such that
\begin{align}\label{eq:cond2}
\left\|\coarse\left(\x\right)-\coarse\left(\y\right)\right\|_2 \leq \left(1+\beta\Delta t\right)\left\|\x-\y\right\|_2
\end{align}
for every $\x,\y\in\mathbb{R}^d$. Then there exists a positive constant $\gamma$ such that, at the $i-$th iterate of the Parareal method, the following bound holds
\[
\left\|\x(t_n) - \x_n^{{i}}\right\|_2 \leq \frac{\alpha}{\gamma}\frac{\left(\gamma(\Delta t)^{p+1}\right)^{{i}+1}}{({i}+1)!}\left(1+\beta\Delta t\right)^{n-{i}-1}\prod_{j=0}^{{i}}\left(n-j\right).
\]
\end{theorem}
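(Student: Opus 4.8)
The plan is to control the error $\e_n^i := \x(t_n) - \x_n^i$ through a two-index recurrence that can be unrolled in closed form. Since $\fine = \phi_{\F}^{\Delta t}$ by assumption, the exact solution obeys $\x(t_{n+1}) = \fine(\x(t_n))$; subtracting this from the update \eqref{eq:correction} and writing the fine step as $\fine = g + \coarse$ with $g := \fine - \coarse$, I would obtain, after cancellation of the two coarse evaluations at $\x_n^i$,
\begin{align*}
\e_{n+1}^{i+1} = \bigl(g(\x(t_n)) - g(\x_n^i)\bigr) + \bigl(\coarse(\x(t_n)) - \coarse(\x_n^{i+1})\bigr).
\end{align*}
This identity is the structural core of the argument; everything else is estimation.

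The two scalar ingredients come directly from the hypotheses. The second line of \eqref{eq:cond1} bounds the magnitude $\|g(\x)\|_2 \le \alpha(\Delta t)^{p+1}$, while its first line expresses $g$ through the $C^1$ coefficients $c_{p+1}, c_{p+2}, \dots$; restricting to a compact set containing all iterates and the exact trajectory makes each $c_j$ Lipschitz there, so that for $\Delta t$ small enough $\|g(\x) - g(\y)\|_2 \le \gamma(\Delta t)^{p+1}\|\x-\y\|_2$ with $\gamma$ the constant in the statement. Feeding this Lipschitz estimate together with \eqref{eq:cond2} into the identity above and setting $E_n^i := \|\e_n^i\|_2$ yields the master recurrence
\begin{align*}
E_{n+1}^{i+1} \le \gamma(\Delta t)^{p+1}\,E_n^i + (1+\beta\Delta t)\,E_n^{i+1}, \qquad E_0^i = 0,
\end{align*}
the boundary condition holding because every iterate shares the exact initial value $\x_0$.

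I would then induct on the iteration index $i$. The base case $i=0$ uses $\x_{n+1}^0 = \coarse(\x_n^0)$ and the magnitude bound on $g$ to give $E_{n+1}^0 \le \alpha(\Delta t)^{p+1} + (1+\beta\Delta t)E_n^0$, whose solution $\alpha(\Delta t)^{p+1}\sum_{k=0}^{n-1}(1+\beta\Delta t)^k$ is dominated term-by-term by $\alpha(\Delta t)^{p+1}\,n\,(1+\beta\Delta t)^{n-1}$, which is exactly the claimed bound at $i=0$. For the inductive step I would read the master recurrence as a first-order linear recurrence in $n$ with forcing $\gamma(\Delta t)^{p+1}E_n^i$, so that
\begin{align*}
E_n^{i+1} \le \gamma(\Delta t)^{p+1}\sum_{m=0}^{n-1}(1+\beta\Delta t)^{n-1-m}\,E_m^i,
\end{align*}
and substitute the inductive hypothesis for $E_m^i$.

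The main obstacle, and the only delicate computation, is to verify that this sum collapses to the stated closed form. Pulling out the powers of $\gamma(\Delta t)^{p+1}$ and $(1+\beta\Delta t)$ leaves the purely combinatorial sum $\sum_{m=0}^{n-1}\prod_{j=0}^{i}(m-j)$; rewriting $\prod_{j=0}^{i}(m-j) = (i+1)!\binom{m}{i+1}$ and applying the hockey-stick identity $\sum_{m=0}^{n-1}\binom{m}{i+1} = \binom{n}{i+2}$ turns it into $\tfrac{1}{i+2}\prod_{j=0}^{i+1}(n-j)$. This is precisely the factor needed to raise the exponent of $\gamma(\Delta t)^{p+1}$ from $i+1$ to $i+2$ and the factorial from $(i+1)!$ to $(i+2)!$, closing the induction and reproducing the bound in the statement. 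I expect the careful bookkeeping of the three $\Delta t$-powers together with this identity to be the crux; the chaining of Lipschitz estimates that precedes it is routine.
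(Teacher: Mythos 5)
Your proof is correct, but note that the paper itself contains no proof of this statement: it imports it verbatim as Theorem 1 of \cite{gander2008nonlinear}, so the relevant comparison is with the proof in that reference. There, Gander and Hairer derive exactly your error identity and master recurrence: writing $g=\fine-\coarse$, the continuous differentiability of the coefficients $c_j$ in \eqref{eq:cond1} gives $\|g(\x)-g(\y)\|_2\leq \gamma(\Delta t)^{p+1}\|\x-\y\|_2$ on a suitable compact set, and combining this with \eqref{eq:cond2} and the update \eqref{eq:correction} yields $E_{n+1}^{i+1}\leq \gamma(\Delta t)^{p+1}E_n^i+(1+\beta\Delta t)E_n^{i+1}$, with the zeroth iterate controlled through $\alpha$. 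The genuine divergence is in how this double recurrence is solved: the original proof passes to the generating function $\rho_i(\zeta)=\sum_n E_n^i\,\zeta^n$, shows $\rho_{i+1}(\zeta)\leq \frac{\gamma(\Delta t)^{p+1}\zeta}{1-(1+\beta\Delta t)\zeta}\,\rho_i(\zeta)$, and extracts the product $\prod_{j=0}^{i}(n-j)$ by comparing coefficients with the binomial series of $\left(1-(1+\beta\Delta t)\zeta\right)^{-(i+2)}$, whereas you unroll the recurrence in $n$ and close a direct induction on $i$ via $\prod_{j=0}^{i}(m-j)=(i+1)!\binom{m}{i+1}$ and the hockey-stick identity. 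These are the same combinatorics in different clothing --- the hockey-stick sum is precisely the coefficient identity underlying the binomial series --- but your route is more elementary and self-contained, while the generating-function route automates the bookkeeping and keeps the sharper pre-domination bound (before the step $\sum_{k=0}^{n-1}(1+\beta\Delta t)^k\leq n(1+\beta\Delta t)^{n-1}$, which both arguments ultimately invoke). Two points are worth making explicit if you write this up: (i) your inductive hypothesis must hold for all $m$, including $m\leq i$ where the claimed bound vanishes because the product contains the factor $m-m=0$; this is consistent, since your unrolled sum then forces $E_m^{i+1}=0$ for $m\leq i+1$, matching Parareal's exactness up to $t_{i+1}$ when $\fine=\phi_{\mathcal F}^{\Delta t}$; (ii) deducing a single constant $\gamma$ from \eqref{eq:cond1} requires uniform control of the Lipschitz constants of \emph{all} the $c_j$ so the tail series can be summed --- a point the original proof glosses over in essentially the same way you do, so your treatment is on par with the cited source.
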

This result guarantees that as the iteration progresses, the method provides an increasingly accurate solution. Furthermore, when $i=n$, the last product on the right-hand side vanishes, which corresponds to the worst-case scenario of the sequential solution, a.k.a.~at the $n$th iterate, the above idealized Parareal method replicates the analytical solution for the time subintervals up to $t_n$.

We take advantage of this convergence result in Section \ref{se:elm}, constructing the coarse propagator as a neural network satisfying the assumptions of Theorem \ref{thm:convergenceParareal}.

\section{A-posteriori error estimate for neural network-based solvers}\label{se:theory}
We aim to design a hybrid parallel-in-time solver for \eqref{eq:ode} based on the Parareal algorithm. This procedure consists of the Parareal iteration where the coarse propagator $\coarse$ is replaced by a neural network. In Section \ref{se:elm}, we will focus on a particular class of neural networks, called Random Projection Neural Networks (RPNNs). For now, however, we do not specify the structure of the neural network and define it as a map $\mathcal{N}_{\theta}:[0,\Delta t]\times\mathbb{R}^d\to\mathbb{R}^d$, parametrized by weights $\theta$, and satisfying the initial condition of the ODE, $\ELM{\theta,0,0,{\x_0}}=\x_0$. 

In the classical Parareal iteration, the coarse propagator $\coarse[\Delta t_n]$ is a map satisfying $\x(t_{n+1})\approx \coarse[\Delta t_n](\x(t_{n}))$, where $\x(t)$ solves \eqref{eq:ode}.
The coarse propagator balances the cost versus accuracy of the approximation, with the sweet spot yielding optimal parallel speedup. With this in mind, we design our replacement to be a continuous function of time and to allow longer steps than commonly taken by single-step numerical methods as employed by $\coarse[\Delta t_n]$. Motivated by collocation methods \cite[Chapter II.7]{hairer1993solving}, we choose the weights of the neural network $\mathcal{N}_{\theta}$ so that it satisfies the differential equation \eqref{eq:ode} at some collocation points in the interval $[0,\Delta t]$. More explicitly, given a set $\{t_{1},\ldots,t_{C}\}\subset [0,\Delta t]$, we look for a set of weights $\theta$ minimizing the loss function
\begin{equation}\label{eq:loss}
\mathcal{L}\left(\theta,\x_0\right):=\sum_{c=1}^C\left\|\dELM{\theta,t_c,0,{\x_0}} - \F\left(\ELM{\theta,t_c,0,{\x_0}}\right)\right\|_2^2.
\end{equation}
Consistent with our convention, in \eqref{eq:loss} $'$ denotes the time derivative, i.e., the derivative with respect to the first component.

In the following, we propose an error analysis for the approximate solution $\mathcal{N}_{\theta}$. This error analysis allows us to provide a-posteriori theoretical guarantees on both, the accuracy of the network $\ELM{\theta,\Delta t,0,{\x_0}}$ as a continuous approximation of the solution, as well as its potential as a replacement of  $\coarse(\x_0)$ while keeping intact the convergence guarantees of Parareal. We focus on a practical error estimate based on quadrature rules. For an, albeit less practical, alternative estimate based on defect control see Section \ref{se:convergenceDefect} of the supplementary material.

\begin{assumption}\label{ass:collocation}
Assume that the collocation points $\{t_{1},\ldots,t_{C}\}\subset [0,\Delta t]$, with $t_{1}<\cdots<t_{C}$, define a Lagrange quadrature rule exact up to order $p$ for some given $p\geq 1$, i.e., there is a set of weights $\rho_1,\ldots,\rho_C$ for which
\begin{equation}\label{eq:quadrature}
\int_{0}^{\Delta t} f\left(t\right)\dd{t} = \sum_{c=1}^C \rho_c f\left(t_{c}\right)=:\qr{f,0,{\Delta t}},\,\,\forall f\in \mathbb{P}_{p-1},
\end{equation}
where $\mathbb{P}_{p-1}$ is the set of real polynomials of degree $p-1$.
\end{assumption}
For a set of collocation points satisfying Assumption \ref{ass:collocation} and any scalar $p-$times continuously differentiable function $f\in\mathcal{C}^{p}(\mathbb{R},\mathbb{R})$, it holds \cite[Chapter 9]{quarteroni2006numerical}
\begin{equation}\label{eq:errorEstimateQuad}
\left|\qr{f,0,{\Delta t}}-\int_{0}^{\Delta t}f\left(t\right)\dd{t}\right|\leq \kappa (\Delta t)^{p+1}\max_{\xi\in [0,\Delta t]}\left|f^{(p)}\left(\xi\right)\right|,\,{\kappa }>0,
\end{equation}
where $f^{(p)}$ is the derivative of $f$ of order $p$.

We can now formulate a quadrature-based a-posteriori error estimate for the \emph{continuous} approximation $\ELM{\theta,t,0,{\x_0}}$ that only requires the defect to be sufficiently small at the collocation points.
\begin{theorem}[Quadrature-based a-posteriori error estimate]\label{thm:QuadAposteriori}
Let $\x(t)$ be the solution of the initial value problem \eqref{eq:ode} with $\F\in\mathcal{C}^{p+1}(\mathbb{R}^d,\mathbb{R}^d)$. Suppose that Assumption \ref{ass:collocation} on the $C$ collocation points $0\leq t_{1}<\cdots<t_{C}\leq \Delta t$ is satisfied and assume that $\ELM{\theta,\cdot,0,{\x_0}}:[0,\Delta t]\to\mathbb{R}^d$ is smooth and satisfies the collocation conditions up to some error of magnitude $\varepsilon$, i.e.
\begin{equation}\label{eq:residualEpsilon}
\left\|\dELM{\theta,t_c,0,{\x_0}} - \F\left(\ELM{\theta,t_c,0,{\x_0}}\right)\right\|_2\leq \varepsilon,\,\,c=1,\ldots,C.
\end{equation}
Then, there exist two constants $\alpha,\beta>0$ such that, for all $t\in [0,\Delta t]$, 
\begin{equation}\label{eq:aposteriori}
\left\|\x\left(t\right)-\ELM{\theta,t,0,{\x_0}}\right\|_2  \leq \alpha (\Delta t)^{p+1} + \beta\varepsilon.
\end{equation}
\end{theorem}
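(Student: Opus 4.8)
The plan is to recast the network's trajectory as the solution of a \emph{perturbed} copy of \eqref{eq:ode} and to measure the resulting perturbation with the nonlinear variation-of-constants (Gröbner--Alekseev) formula. Concretely, I would define the defect
\[
\bfd(s) := \dELM{\theta,s,0,{\x_0}} - \F\!\left(\ELM{\theta,s,0,{\x_0}}\right),\quad s\in[0,\Delta t],
\]
so that $s\mapsto\ELM{\theta,s,0,{\x_0}}$ solves $\y'(s)=\F(\y(s))+\bfd(s)$ with $\y(0)=\x_0$ (using $\ELM{\theta,0,0,{\x_0}}=\x_0$), while $\x$ solves the unperturbed problem from the same initial datum. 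Since $\F\in\mathcal C^{p+1}$ and the network is smooth, $\bfd\in\mathcal C^{p}$ and, by \eqref{eq:residualEpsilon}, $\|\bfd(t_c)\|_2\le\varepsilon$ at every collocation node.

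First I would invoke the Gröbner--Alekseev theorem to obtain the exact representation
\[
\ELM{\theta,t,0,{\x_0}}-\x(t)=\int_0^t \Psi(t,s)\,\bfd(s)\,\dd{s},\qquad \Psi(t,s):=\partial_{\x}\phi^{\,t-s}_{\F}\!\left(\ELM{\theta,s,0,{\x_0}}\right),
\]
where $\Psi$ is the Jacobian of the exact flow. Because $\F\in\mathcal C^{p+1}$, the flow and its derivative are smooth, so $\Psi$ is continuous on the compact set $\{0\le s\le t\le\Delta t\}$, whence $\|\Psi(t,s)\|_2\le M$, and $s\mapsto\Psi(t,s)$ is $\mathcal C^p$.

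The crux is to estimate this integral at the sharp order. Working componentwise at the endpoint $t=\Delta t$, each $g_i(s):=\left[\Psi(\Delta t,s)\bfd(s)\right]_i$ is a scalar $\mathcal C^p$ function, so the quadrature rule of Assumption \ref{ass:collocation} together with \eqref{eq:errorEstimateQuad} gives
\[
\int_0^{\Delta t} g_i(s)\,\dd{s}=\qr{g_i,0,{\Delta t}}+R_i,\qquad |R_i|\le\kappa(\Delta t)^{p+1}\max_{\xi\in[0,\Delta t]}\left|g_i^{(p)}(\xi)\right|.
\]
The quadrature sum is controlled purely by the defect at the nodes: $\left|\qr{g_i,0,{\Delta t}}\right|\le\sum_{c}|\rho_c|\,\|\Psi(\Delta t,t_c)\|_2\,\|\bfd(t_c)\|_2\le M\varepsilon\sum_c|\rho_c|$, and $\sum_c|\rho_c|=O(\Delta t)$ since exactness on constants forces $\sum_c\rho_c=\Delta t$. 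Summing the $d$ components and absorbing the (finite, $\Delta t$-uniform) bounds on $\|\Psi\|_2$ and on the derivatives $g_i^{(p)}$ into constants yields exactly $\|\ELM{\theta,\Delta t,0,{\x_0}}-\x(\Delta t)\|_2\le\alpha(\Delta t)^{p+1}+\beta\varepsilon$, with $\beta$ coming from the node/weight term and $\alpha$ from the quadrature remainder. For a general $t\in[0,\Delta t]$ one either repeats this estimate, or bounds $\|\ELM{\theta,t,0,{\x_0}}-\x(t)\|_2\le M\int_0^{\Delta t}\|\bfd(s)\|_2\,\dd{s}$ and controls the defect integral by splitting $\bfd$ into its Lagrange interpolant at the nodes (whose value is $\le\varepsilon$ up to the Lebesgue constant) plus an interpolation remainder of size $O((\Delta t)^{p+1})$.

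The main obstacle is this third step: extracting the order $(\Delta t)^{p+1}$ rather than the naive $O(\Delta t)$ that a crude bound $\Delta t\sup\|\bfd\|_2$ would give. This gain is exactly what the quadrature exactness buys, but it must be applied to the \emph{smooth} integrand $\Psi(\Delta t,\cdot)\bfd(\cdot)$ (or to the interpolant of $\bfd$), never to the non-smooth map $s\mapsto\|\bfd(s)\|_2$; and it hinges on the uniform-in-$\Delta t$ boundedness of $g_i^{(p)}$, which is precisely where the hypothesis $\F\in\mathcal C^{p+1}$ --- guaranteeing enough smoothness of both the flow Jacobian $\Psi$ and the defect $\bfd$ --- is essential.
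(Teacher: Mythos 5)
Your proposal is correct and follows the same architecture as the paper's own proof: recast the network trajectory as a solution of \eqref{eq:ode} perturbed by the defect $\bfd$, represent the error exactly via the Gr\"obner--Alekseev formula (Theorem \ref{thm:alekseev}), and then convert smallness of $\bfd$ at the nodes into a $\beta\varepsilon$ term through the quadrature sum, plus an $\alpha(\Delta t)^{p+1}$ term through the remainder estimate \eqref{eq:errorEstimateQuad}. The one genuine difference is \emph{where} the quadrature is applied. The paper first bounds the flow Jacobian in norm by a constant $\delta$ and then applies the quadrature to the bare defect, i.e.\ it passes from $\|\int_0^t \Psi(t,s)\,\bfd(s)\,\dd{s}\|_2$ to $\delta\,\|\int_0^t \bfd(s)\,\dd{s}\|_2$, writing $\Psi(t,s)$ for the Jacobian of the flow; you instead keep $\Psi$ inside the integral and apply the quadrature componentwise to the smooth scalar functions $g_i(s)=[\Psi(t,s)\bfd(s)]_i$. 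Your variant is in fact the more careful one: the paper's pull-out step is not a consequence of the triangle inequality (which only yields $\delta\int_0^t\|\bfd(s)\|_2\,\dd{s}$, whose integrand is precisely the non-smooth map you correctly warn against feeding into the quadrature), whereas your version needs only that $s\mapsto\Psi(t,s)$ is bounded and $\mathcal{C}^p$, which $\F\in\mathcal{C}^{p+1}$ supplies. Two blemishes, neither fatal. First, the inference $\sum_c|\rho_c|=O(\Delta t)$ from exactness on constants requires nonnegative weights (or a fixed reference rule rescaled to $[0,\Delta t]$); but the theorem only needs $\sum_c|\rho_c|$ to be a finite constant --- indeed the paper simply sets $\beta=\delta\sum_c|\rho_c|$ --- so you can drop that claim. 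Second, in your fallback for intermediate $t\in(0,\Delta t)$, interpolating $\bfd$ at the $C$ nodes leaves a remainder of size $O((\Delta t)^{C})$, hence $O((\Delta t)^{C+1})$ after integration, not $O((\Delta t)^{p+1})$ as you state; the two orders agree only for interpolatory rules with $p=C$, while for $p>C$ (Gauss, Lobatto) the superconvergent order $p+1$ is available only at the endpoint $t=\Delta t$, where your main argument applies. Note, though, that the paper's proof is equally brisk on this point, applying the $[0,\Delta t]$ quadrature error bound to $\int_0^t$ for arbitrary $t$, so your attempt is at parity with it there and strictly more careful in the Jacobian step.
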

The proof of Theorem \ref{thm:QuadAposteriori} is based on the Gr{\"o}bner-Alekseev formula \cite[Theorem~14.5]{hairer1993solving} that we now state for completeness. 
\begin{theorem}[Gr{\"o}bner-Alekseev]\label{thm:alekseev}
For $\F\in\mathcal{C}^1(\mathbb{R}^d,\mathbb{R}^d)$ and $\mathcal{G}:\mathbb{R}^d\rightarrow\mathbb{R}^d$ consider the solutions $\x(t)$ and $\y(t)$ of the two ODEs
\[
\begin{cases}
    \xp\left(t\right) = \F\left(\x\left(t\right)\right),\\
    \x\left(0\right) = \x_0,
\end{cases}\quad \begin{cases}
    \yp\left(t\right) = \F\left(\y\left(t\right)\right) + \mathcal{G}\left(\y\left(t\right)\right),\\
    \y\left(0\right) = \x_0,
\end{cases}
\]
assuming they both have a unique solution. For any times $0\leq s\leq t$, let $\phi_{\F}^{s,t}(\y(s))$ be the exact solution of the initial value problem 
\[
\begin{cases}
\xp\left(t\right) = \F\left(\x\left(t\right)\right),\\
\x\left(s\right) = \y\left(s\right).
\end{cases}
\]
Then, for any $t\geq 0$, one has
\begin{equation}\label{eq:perturbation}
\y\left(t\right) = \x\left(t\right) + \int_{0}^t \frac{\partial \phi_{\F}^{s,t}\left(\x_0\right)}{\partial \x_0}\bigg\vert_{\x_0 = \y\left(s\right)}\mathcal{G}(\y\left(s\right))\dd{s}.
\end{equation}
\end{theorem}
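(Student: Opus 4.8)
The statement to prove is the Gröbner–Alekseev (nonlinear variation-of-constants) formula itself, and the plan is the classical \say{differentiate the unperturbed flow along the perturbed trajectory} argument. Fix the final time $t$ and introduce the auxiliary curve
\[
G(s) := \phi_{\F}^{s,t}\!\left(\y(s)\right),\qquad s\in[0,t],
\]
which transports the perturbed state $\y(s)$ forward to time $t$ under the \emph{unperturbed} flow $\phi_{\F}$. Its two endpoints are exactly the quantities in the formula: since the flow over a degenerate interval is the identity, $G(t)=\phi_{\F}^{t,t}(\y(t))=\y(t)$; and since $\y(0)=\x_0$, we get $G(0)=\phi_{\F}^{0,t}(\x_0)=\x(t)$. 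By the fundamental theorem of calculus, $\y(t)-\x(t)=\int_0^t G'(s)\,\dd{s}$, so the whole statement reduces to identifying $G'(s)$ with the integrand on the right-hand side of \eqref{eq:perturbation}.

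The next step is to differentiate the composition $G(s)=\phi_{\F}^{s,t}(\y(s))$ by the chain rule, which produces one term from the initial-time slot and one from the state slot. Writing $\partial_s\phi_{\F}^{s,t}$ for the derivative in the starting time and $\partial_{\x_0}\phi_{\F}^{s,t}$ for the Jacobian in the initial condition, one has
\[
G'(s)=\left(\partial_s\phi_{\F}^{s,t}\right)\!\left(\y(s)\right)+\left(\partial_{\x_0}\phi_{\F}^{s,t}\right)\!\left(\y(s)\right)\yp(s).
\]
The decisive ingredient is the identity
\[
\left(\partial_s\phi_{\F}^{s,t}\right)(\xi)=-\left(\partial_{\x_0}\phi_{\F}^{s,t}\right)(\xi)\,\F(\xi),
\]
which I would derive from the semigroup property $\phi_{\F}^{s,t}=\phi_{\F}^{s+h,t}\circ\phi_{\F}^{s,s+h}$ together with the first-order expansion $\phi_{\F}^{s,s+h}(\xi)=\xi+h\F(\xi)+O(h^2)$: expanding the right-hand side in $h$ and matching the $O(h)$ terms gives the claimed relation. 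Substituting this identity and $\yp(s)=\F(\y(s))+\mathcal{G}(\y(s))$ into the expression for $G'(s)$, the two $\F(\y(s))$ contributions cancel and leave
\[
G'(s)=\left(\partial_{\x_0}\phi_{\F}^{s,t}\right)\!\left(\y(s)\right)\mathcal{G}\!\left(\y(s)\right)=\frac{\partial \phi_{\F}^{s,t}(\x_0)}{\partial \x_0}\bigg\vert_{\x_0=\y(s)}\mathcal{G}\!\left(\y(s)\right),
\]
which is precisely the integrand in \eqref{eq:perturbation}; integrating from $0$ to $t$ finishes the proof.

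The routine part is the chain-rule bookkeeping and the final cancellation; the main obstacle is the justification of the regularity and of the key identity $\partial_s\phi_{\F}^{s,t}=-\partial_{\x_0}\phi_{\F}^{s,t}\,\F$. I would lean on standard smooth-dependence theory for ODEs: because $\F\in\mathcal{C}^1$, the flow $\phi_{\F}^{s,t}(\xi)$ is jointly $\mathcal{C}^1$ in the starting time $s$ and the initial state $\xi$, and the Jacobian $\partial_{\x_0}\phi_{\F}^{s,t}$ solves the associated variational (first-variation) equation. This $\mathcal{C}^1$ regularity, together with the differentiability of $\y$ guaranteed by the assumed existence and uniqueness of the perturbed solution, is exactly what legitimises both the chain-rule computation of $G'$ and the Taylor expansion used to establish the identity; I would flag these smoothness prerequisites explicitly but invoke them as classical results rather than reprove them.
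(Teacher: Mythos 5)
Your proof is correct; the paper itself states this theorem without proof, citing \cite[Theorem~14.5]{hairer1993solving}, and your argument (differentiating $s\mapsto\phi_{\F}^{s,t}(\y(s))$, using the identity $\partial_s\phi_{\F}^{s,t}(\xi)=-\partial_{\x_0}\phi_{\F}^{s,t}(\xi)\,\F(\xi)$ from the semigroup property, and integrating) is precisely the classical proof given in that cited reference. Your explicit flagging of the smooth-dependence prerequisites for the chain rule and the fundamental theorem of calculus is appropriate and complete.
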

We now prove the a-posteriori error estimate in Theorem \ref{thm:QuadAposteriori} using Theorem \ref{thm:alekseev}.
\begin{proof}[Proof of Theorem \ref{thm:QuadAposteriori}]
Let $\x(t)$ be the solution of the initial value problem \eqref{eq:ode}. Further note that $t\mapsto \ELM{\theta,t,0,{\x}}$ satisfies the initial value problem
\[
\begin{cases}
   \dELM{\theta,t,0,{\x_0}}= \F\left(\ELM{\theta,t,0,{\x_0}}\right) +  \left[\dELM{\theta,t,0,{\x_0}} - \F\left(\ELM{\theta,t,0,{\x_0}}\right)\right],\\
    \ELM{\theta,0,0,{\x_0}}= \x_0.
\end{cases}
\]
{Setting  $\mathcal{G}\left(\ELM{\theta,t,0,{\x_0}}\right) =\dELM{\theta,t,0,{\x_0}} - \F\left(\ELM{\theta,t,0,{\x_0}}\right)$, from \eqref{eq:perturbation} we obtain}
\begin{align*}
&\left\|\x\left(t\right)-\ELM{\theta,t,0,{\x_0}}\right\|_2 = \left\| \int_{0}^{t} \frac{\partial \phi_{\F}^{s,t}\left(\y_0\right)}{\partial \y_0}\bigg\vert_{\y_0 = \ELM{\theta,s,0,{\x_0}}}\mathcal{G}\left(\ELM{\theta,s,0,{\x_0}}\right)\dd{s} \right\|_2 \\
&\leq \delta \left\| \int_{0}^{t}\left[\dELM{\theta,s,0,{\x_0}} - \F\left(\ELM{\theta,s,0,{\x_0}}\right)\right]\dd{s} \right\|_2,
\end{align*}
where $0 < \delta  <\infty$ bounds the norm of the Jacobian matrix of $\phi_{\F}^{s,t}$ for $0\leq s\leq t\leq \Delta t$ by virtue of $\F\in\mathcal{C}^1(\mathbb{R}^d,\mathbb{R}^d)$. 
Approximating the integral with the quadrature and subsequently bounding the residual at the collocation points, we obtain
\begin{align*}
\left\|\x\left(t\right)-\ELM{\theta,t,0,{\x_0}}\right\|_2
&\leq \delta \left(\left\| \sum_{c=1}^C\rho_{c} \left[\dELM{\theta,t_c,0,{\x_0}} - \F\left(\ELM{\theta,t_c,0,{\x_0}}\right)\right] \right\|_2  + {\bar{\kappa}}(\Delta t)^{p+1}\right)\\
&\leq  \delta\left(\varepsilon\sum_{c=1}^C\left|\rho_c\right|+ {\bar{\kappa}}(\Delta t)^{p+1}\right),
\end{align*}
where $t\in [0,\Delta t]$ and
\[
{\bar{\kappa}:= \kappa }\cdot\left(\max_{t\in \left[0,\Delta t\right]}\left\|\frac{d^{p}}{dt^{p}}\left[\dELM{\theta,t,0,{\x_0}} - \F\left(\ELM{\theta,t,0,{\x_0}}\right)\right]\right\|_2\right)>0,
\]
the right-hand side of \eqref{eq:errorEstimateQuad}. To conclude the proof we set $\alpha = {\bar{\kappa}}\delta$, $\beta=\delta\sum_{c=1}^C\left|\rho_c\right|$.
\end{proof}
While for the proof it {suffices that $\delta$ is finite, more practical bounds based on the one-sided Lipschitz constant of the vector field can be obtained.} We derive such a bound in Section \ref{app:boundNormJac} of the supplementary material.

{Given $\beta\varepsilon\ll (\Delta t)^{p+1}$, Theorem \ref{thm:QuadAposteriori} implies that} the approximation provided by the neural network is as accurate as the one provided by a {$p$th order one-step method with step size $\Delta t$}. This result allows us to replace the coarse integrator $\coarse$ with a neural network-based solver maintaining the convergence properties of Parareal.

\section{Parareal method based on Random Projection Neural Networks}\label{se:elm}
The theoretical results presented in Section \ref{se:theory} hold for generic continuous approximate solutions, particularly those provided by any neural network $\mathcal{N}_{\theta}$. {We now restrict the neural architecture to Random Projection Neural Networks (RPNNs) which allow a more efficient, hence faster, solution of the  optimization problem \eqref{eq:loss}} as we will highlight in Section \ref{se:experiments}.
\subsection{Architecture design}
RPNNs are feedforward neural networks composed of two layers, with trainable parameters confined to the outermost layer.
We draw the weights of the first layer from the continuous uniform distribution {$\mathcal{U}(\underline{\omega},\overline{\omega})$, for a lower bound $\underline{\omega}$ and an upper bound $\overline{\omega}$ which are set to $-1$ and $1$, respectively, in all our experiments.} We then aim to approximate the solution of \eqref{eq:ode} at a time $t$ with the parametric function
\begin{align}
&\ELM{\theta,t,0,{\x_0}} =\x_0 + \sum_{h=1}^H\bb{w}_h\left(\varphi_h\left(t\right) - \varphi_h\left(0\right)\right) =\x_0 + \sum_{h=1}^H\bb{w}_h\left(\sigma\left(a_h t +b_h\right) - \sigma\left(b_h\right)\right)\nonumber\\
 &={\x_0 + \theta^{\top}\left(\sigma\left(\bb{a}\, t + \bb{b}\right)-\sigma\left(\bb{b}\right)\right),\,\,\bb{a}=\begin{bmatrix} a_1 \\ \vdots \\ a_H\end{bmatrix},\,\,\bb{b}=\begin{bmatrix} b_1 \\ \vdots \\ b_H\end{bmatrix} \in\mathbb{R}^H,\,\,\theta=\begin{bmatrix} \bb{w}_1^{\top} \\ \vdots\\ \bb{w}_H^{\top}\end{bmatrix}\in\mathbb{R}^{H\times d},}\label{eq:ELM}
\end{align}
by training the weights collected in the matrix $\theta$. Here, $\varphi_h(t)=\sigma(a_h t + b_h)\in \mathbb R$, $h=1,\ldots,H$, is a given set of basis functions with {$a_h,b_h\sim\mathcal{U}(\underline{\omega},\bar\omega)$,}
and $\sigma\in\mathcal{C}^{\infty}(\mathbb{R},\mathbb{R})$ a smooth activation function. In the numerical experiments, we always choose {$\sigma(\cdot)=\tanh(\cdot)$}. The architecture in \eqref{eq:ELM} satisfies the initial condition of \eqref{eq:ode}, i.e., $\ELM{\theta,0,0,{\x_0}}=\x_0$. 
In addition, $t\mapsto\ELM{\theta,t,0,{\x_0}}$ and $\sigma$ have the same regularity.

\subsection{Algorithm design}
Our method closely mimics the Parareal algorithm, but with the network \eqref{eq:ELM} deployed as a coarse propagator in the Parareal update \eqref{eq:correction}. While in the classical Parareal algorithm the coarse propagator $\coarse[\Delta t_n]$ is assumed to be known for all sub-intervals and Parareal iterations, we do not make this assumption in our approach. Instead, we determine individual weights for the update of each of the initial conditions featuring in the update \eqref{eq:correction} to allow for a better adaptation to the local behavior of the approximated solution. Furthermore, our neural coarse integrator $\coarse[\Delta t_n]$ is not known ahead of time but is recovered and changing in the course of the Parareal iteration. Learning the coarse integrator involves training a RPNN on each of the sub-intervals to solve the ODE \eqref{eq:ode} at a set of fixed collocation points in the sub-interval. This procedure would be prohibitively expensive for generic neural networks trained with gradient-based methods. However, for RPNNs, estimating the matrix $\theta$ in \eqref{eq:ELM} is considerably cheaper and {comparable with classical collocation approaches, striking a balance between the computational efficiency, desirable behavior, and flexibility of the integrator.} Finally, in Section \ref{se:convergenceFixedPoint} we demonstrate that our approach is provably convergent.

\subsection{{Training strategy}}
The neural coarse propagator for solution \eqref{eq:ode} on the time interval $[0,T]$ is obtained by splitting the interval into $N$ sub-intervals, $\Delta t_{n} = t_{n+1}-t_{n}$, $t_0=0<t_1<\cdots < t_N=T$, and training $N$ individual RPNNs in sequence. On the $n$th sub-interval $[t_n, t_{n+1}]$ we train a RPNN of the form \eqref{eq:ELM} to solve the ODE system \eqref{eq:ode} approximately on this sub-interval. The initial condition at time $t_{n}$ is obtained by the evaluation of the previous Parareal correction step. Since the vector field $\F$ in \eqref{eq:ode} does not explicitly depend on the time variable, we can restrict our presentation to a solution on a sub-interval $[0,\Delta t_n]$
\begin{align}\label{eq:odesubinterval}
\begin{cases}
\xp\left(t\right)=\F\left(\x\left(t\right)\right),\\
\x\left(0\right)=\x_n^i,
\end{cases}
\end{align}
where the superscript $i$ refers to $i$th Parareal iterate.

To train {a RPNN \eqref{eq:ELM} on the sub-interval} $[0,\Delta t_n]$, we introduce $C$ collocation points $0 \leq t_{n,1} < \cdots < t_{n,C} \leq \Delta t_n$, where the subscript $n$ keeps track of the interval length $\Delta t_n$ {emphasizing the independent choice of collocation points on each sub-interval}. For a given initial condition $\x_n^{{i}}$, we  find a matrix $\theta_n^{{i}}$ {such that $\mathcal N_{\theta_n^{{i}}}$ approximately satisfies \eqref{eq:odesubinterval}} for all $t_{n,c}$, $c=1,\ldots,C$, by solving the optimization problem
\begin{equation}\label{eq:optProblem}
\theta_n^i = \arg\min_{\theta\in\mathbb{R}^{H\times d}}\sum_{c=1}^C\left\|\dELM{\theta,t_{n,c},0,{\x_n^{{i}}}} - \F\left(\ELM{\theta,t_{n,c},0,{\x_n^{{i}}}}\right)\right\|_2^2.
\end{equation} 
{This hybrid Parareal method returns approximations of the solution at the time nodes $t_0,...,t_N$, which we call $\x_0,...,\x_N$. Furthermore, since the RPNNs on sub-intervals are smooth functions of time, one could also access a piecewise smooth approximation of the curve $[0,T]\ni t\mapsto \x(t)$ by evaluating the individual RPNNs upon convergence of the Parareal iteration}
\begin{equation}\label{eq:piecewise}
\tx\left(t\right) = \ELM{\theta_n,t-t_n,0,{\x_n}},\,\,t\in \left[t_n,t_{n+1}\right),\,\,n=0,\ldots,N-1.
\end{equation}
{Here, $\theta_n$ and $\x_n$ are the weight matrix and the initial condition at the time $t_n$ in the final Parareal iteration.} {Note that even though the points $\x_n^{{i}}$ are updated in each Parareal iteration \eqref{eq:correction}, they do not tend to change drastically, and we can initialize $\theta_{n}^{i+1}$ in \eqref{eq:optProblem} with the previous iterate $\theta_n^i$ to speedup convergence.} We terminate the Parareal iteration when either the maximum number of iterations is reached, or the difference between two consecutive iterates satisfies a given tolerance.
\begin{algorithm}[ht!]
\caption{Hybrid Parareal algorithm based on RPNNs}\label{alg:hybridParareal}
\begin{algorithmic}[1]
\State{\textbf{Inputs : }$\x_0$, \texttt{tol}, \texttt{max\_it}}
\State {$\texttt{error}\gets \texttt{tol}+1$, $i\gets 1$, $\x_0^0\gets\x_0$}
\For{$n = 0$ \textbf{to} $N-1$}\Comment{Zeroth iterate}\label{line:zeroth}
    \State \text{Find $\theta_n^0 = \arg\min_{\theta\in\mathbb{R}^{H\times d}}\sum_{c=1}^C\left\|\dELM{\theta,t_{n,c},0,{\x_n^0}} - \F\left(\ELM{\theta,t_{n,c},0,{\x_n^0}}\right)\right\|^2_2$}\label{line:opt_0}
    \State $\x_{n+1}^0\gets \ELM{\theta_n^0,\Delta t_{n},0,{\x_n^0}}$, $\x^{S,-1}_{n+1}\gets \ELM{\theta_n^0,\Delta t_{n},0,{\x_n^0}}$
\EndFor
\While{$i < \texttt{max\_it}$ \textbf{and} $\texttt{error} > \texttt{tol}$}
\State $\texttt{error}\gets 0$
\For{$n=0$ \textbf{to} $N-1$}\Comment{Fine integrator, \textbf{Parallel} For Loop}
    \State $\x_{n+1}^F \gets \fine[\Delta t_{n}](\x_n^{i-1})$\label{line:fine}
\EndFor
\State $\x_0^i\gets \x_0$
\For{$n=0$ \textbf{to} $N-1$}
    
    \State \text{Find $\theta_n^i = \arg\min_{\theta\in\mathbb{R}^{H\times d}}\sum_{c=1}^C\left\|\dELM{\theta,t_{n,c},0,{\x_n^i}} - \F\left(\ELM{\theta,t_{n,c},0,{\x_n^i}}\right)\right\|^2_2$}\label{line:opt_i}
    \State {$\x_{n+1}^S\gets \ELM{\theta_n^i,\Delta t_{n},0,{\x_n^i}}$}\Comment{Next coarse approximation}
    \State {$\x_{n+1}^i\gets \x_{n+1}^F + \x_{n+1}^S - \x^{S,-1}_{n+1}$}\Comment{Parareal correction}\label{line:correction}
    \State {$\x^{S,-1}_{n+1}\gets \x_{n+1}^S$}
    \State \texttt{error}$\gets \max\left\{\texttt{error},\left\|\x_{n+1}^i-\x_{n+1}^{i-1}\right\|_2\right\}$
\EndFor
\State $i\gets i+1$
\EndWhile
\State \Return $\left\{\x_0^{i-1},\ldots,\x_N^{i-1}\right\}$, $\left\{\theta_0^{i-1},\ldots,\theta_{N-1}^{i-1}\right\}$
\end{algorithmic}
\end{algorithm}
\subsection{Implementation details}
Our hybrid Parareal method is described in Algorithm \ref{alg:hybridParareal} and the  Python code can be found in the associated GitHub repository\footnote{\url{https://github.com/davidemurari/RPNNHybridParareal}}. The zeroth iterate of the method, starting in line \ref{line:zeroth}, only relies on RPNNs to get intermediate initial conditions $\x_n^0$, $n=0,\ldots,N-1$. These initial conditions are then used to solve with the fine integrators $\fine[\Delta t_n]$ the $N$ initial value problems in parallel, see line \ref{line:fine}. These approximations are subsequently updated in the Parareal correction step of line \ref{line:correction}.
 
The Algorithm \ref{alg:hybridParareal} relies on solving a \emph{nonlinear} optimization problem in lines \ref{line:opt_0} and \ref{line:opt_i} to update the weights $\theta_n^i$.
For all systems studied in Section \ref{se:experiments} but the Burgers' equation, we use the Levenberg–Marquardt algorithm \cite[Chapter 10]{nocedal1999numerical}. For Burgers' equation, we rely on the Trust Region Reflective algorithm \cite{branch1999subspace} to exploit the sparsity of the Jacobian matrix. The optimization algorithms are implemented with the $\texttt{least\_squares}$ method of $\texttt{scipy.optimize}$. In both cases, we provide an analytical expression of the Jacobian of the loss function with respect to the weight $\theta$, derived in Section \ref{app:jacobian} of the supplementary material. Additionally, all the systems but the ROBER problem are solved on a uniform grid, i.e., $t_n = nT/N$. For the ROBER problem, we work with a non-uniform grid, refined in $[0,1]$, to capture the spike in the solution occurring at small times.

As common in neural network-based approaches for solving differential equations, see, e.g.\ \cite{de2022physics}, we opt for $C$ equispaced collocation points in each time interval. We also tested Lobatto quadrature points in the Lorenz example in subsection \ref{se:lorenz}. In all experiments, we set $C=5$ and the number of hidden neurons $H=C=5$ to match.

\section{Convergence of the RPNN-based Parareal method}\label{se:convergenceFixedPoint}
In this section, we study the convergence properties of Algorithm \ref{alg:hybridParareal}. Following the Parareal analysis in Theorem \ref{thm:convergenceParareal} we only need to consider the time interval $[0,\Delta t]$ and collocation points $0<t_1<\cdots<t_C<\Delta t$ satisfying Assumption \ref{ass:collocation}. 

We write our solution ansatz, \eqref{eq:ELM}, and its time derivative evaluated at the collocation points as the matrices
\[
\tX_{\theta}\left(\x,\Delta t\right) = \begin{bmatrix} \ELM{\theta,t_{1},0,{\x}}^{\top} \\ \vdots\\ \ELM{\theta,t_{C},0,{\x}}^{\top} \end{bmatrix}\in \mathbb R^{C \times d},\,\quad\dtX_{\theta}\left(\x,\Delta t\right) = \begin{bmatrix} \dELM{\theta,t_{1},0,{\x}}^{\top} \\ \vdots\\ \dELM{\theta,t_{C},0,{\x}}^{\top} \end{bmatrix}\in \mathbb R^{C \times d},
\]
and shorthand the evaluation of the vector field $\F$ on the rows of the matrix $\bfX\in\mathbb{R}^{C\times d}$ 
\[
\bfF\left(\bfX\right) = \begin{bmatrix} \F\left(\bfX^{\top}\bb{e}_1\right)^{\top} \\ \vdots\\ \F\left(\bfX^{\top}\bb{e}_C\right)^{\top} \end{bmatrix}\in \mathbb R^{C \times d},
\]
with $\bb{e}_1,\ldots,\bb{e}_C\in\mathbb{R}^C$ the canonical basis of $\mathbb{R}^C$.

We further rewrite the ansatz as
$\tX_{\theta}\left(\x,\Delta t\right) = \boldsymbol{1}_C \x^\top + (\bfH-\bfH_0) \theta$, where $\boldsymbol{1}_C=\begin{bmatrix} 1 & \dots & 1\end{bmatrix}^{\top}\in\mathbb{R}^C$, 
\[
\bfH = \begin{bmatrix} \sigma\left(\bb{a}^{\top} t_{1} + \bb{b}^{\top}\right) \\
	\vdots \\ 
	\sigma\left(\bb{a}^{\top} t_{C} + \bb{b}^{\top}\right) \end{bmatrix}\in\mathbb{R}^{C\times H},\quad \bfH' = \begin{bmatrix} \sigma'\left(\bb{a}^{\top} t_{1} + \bb{b}^{\top}\right)\odot \bb{a}^{\top} \\ \vdots \\ \sigma'\left(\bb{a}^{\top} t_{C} + \bb{b}^{\top}\right)\odot \bb{a}^{\top}\end{bmatrix}\in\mathbb{R}^{C\times H},
\]
with $\bb{a}^{\top}=\begin{bmatrix} a_1 & \cdots & a_H\end{bmatrix}$, $\bb{b}^{\top}=\begin{bmatrix} b_1 & \cdots & b_H\end{bmatrix}$, and $\sigma\in\mathcal{C}^{\infty}(\mathbb{R},\mathbb{R})$ evaluated componentwise while $\odot$ denotes the componentwise product. As for the experiments, we restrict ourselves to the case $C=H$ 
for which one can prove, see \cite[Theorem 2.1]{huang2006extreme}, 
that with probability one $\bfH$ and $\bfH'$ are invertible for $\bfa,\bfb$ drawn from any continuous probability distribution. 
Finally, $\bfH_0=\boldsymbol{1}_C\sigma\left(\bfb^{\top}\right)\in\mathbb{R}^{C\times H}$ in $\tX_{\theta}\left(\x,\Delta t\right)$, accounts for the initial condition.
\begin{theorem}[Existence and regularity of the solution]\label{thm:existence}
For the loss function
\begin{equation}\label{eq:matrixEqn}
\mathcal{L}(\theta,\x):=\left\|\dtX_{\theta}\left(\x,\Delta t\right) - \bfF\left(\tX_{\theta}\left(\x,\Delta t\right)\right)\right\|_F^2
\end{equation}
with $\mathcal{N}_{\theta}$ in $\tX_{\theta}$ defined as in \eqref{eq:ELM}, $\sigma$ a smooth $1-$Lipschitz activation function, and a choice of step size
\begin{align}\label{eq:deltatcond}
\Delta t \in \left(0,\left(\left\|\inv\right\|_2\mathrm{Lip}(\F)\sqrt{C}\|\bfa\|_2\right)^{-1}\right),
\end{align}
there exists a unique Lipschitz continuous function $\mathbb{R}^d\ni\x\mapsto \theta(\x)\in\mathbb{R}^{H\times d}$ such that $\mathcal{L}(\theta(\x),\x)=0$ for every $\x\in\mathbb{R}^d$.
\end{theorem}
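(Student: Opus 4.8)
The plan is to recast the zero-loss condition as a fixed-point equation and invoke the Banach fixed-point theorem. Since $\|\cdot\|_F^2$ vanishes only at the zero matrix, $\mathcal{L}(\theta,\x)=0$ is equivalent to the collocation system $\dtX_{\theta}(\x,\Delta t)=\bfF(\tX_{\theta}(\x,\Delta t))$. With the ansatz \eqref{eq:ELM} the derivative matrix is linear in the weights, $\dtX_{\theta}(\x,\Delta t)=\bfH'\theta$, while $\tX_{\theta}(\x,\Delta t)=\boldsymbol{1}_C\x^\top+(\bfH-\bfH_0)\theta$. Because $C=H$ and $\bfa,\bfb$ are drawn from a continuous distribution, $\bfH'$ is invertible with probability one, so the system is equivalent to the fixed-point equation
\[
\theta=T_{\x}(\theta):=\inv\,\bfF\!\left(\boldsymbol{1}_C\x^\top+(\bfH-\bfH_0)\theta\right),\qquad T_{\x}:\mathbb{R}^{H\times d}\to\mathbb{R}^{H\times d}.
\]
It therefore suffices to show that, under \eqref{eq:deltatcond}, $T_{\x}$ is a contraction on $\mathbb{R}^{H\times d}$ in the Frobenius norm, uniformly in $\x$.

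Next I would estimate the contraction constant. Since $\bfF$ acts rowwise by applying $\F$, a rowwise Lipschitz estimate gives $\|\bfF(\bfX_1)-\bfF(\bfX_2)\|_F\le \mathrm{Lip}(\F)\|\bfX_1-\bfX_2\|_F$. The technical heart is the bound on $\|\bfH-\bfH_0\|_2$: the $(c,h)$ entry of $\bfH-\bfH_0$ equals $\sigma(a_h t_c+b_h)-\sigma(b_h)$, which by the $1$-Lipschitz property of $\sigma$ and $t_c\in[0,\Delta t]$ is at most $|a_h|\Delta t$ in absolute value, whence
\[
\|\bfH-\bfH_0\|_2\le\|\bfH-\bfH_0\|_F\le \Delta t\sqrt{C}\,\|\bfa\|_2.
\]
Combining these with submultiplicativity of the spectral norm against the Frobenius norm, and using $\tX_{\theta_1}-\tX_{\theta_2}=(\bfH-\bfH_0)(\theta_1-\theta_2)$, yields
\[
\|T_{\x}(\theta_1)-T_{\x}(\theta_2)\|_F\le \underbrace{\|\inv\|_2\,\mathrm{Lip}(\F)\,\Delta t\sqrt{C}\,\|\bfa\|_2}_{=:L}\,\|\theta_1-\theta_2\|_F,
\]
and the step-size condition \eqref{eq:deltatcond} is precisely $L<1$. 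Banach's theorem then produces, for each $\x$, a unique fixed point $\theta(\x)$, hence a unique zero of $\mathcal{L}(\cdot,\x)$.

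For the Lipschitz dependence on $\x$ I would compare the two fixed points via the splitting $\theta(\x_1)-\theta(\x_2)=\bigl[T_{\x_1}(\theta(\x_1))-T_{\x_1}(\theta(\x_2))\bigr]+\bigl[T_{\x_1}(\theta(\x_2))-T_{\x_2}(\theta(\x_2))\bigr]$. The first bracket is controlled by $L\|\theta(\x_1)-\theta(\x_2)\|_F$ through the contraction just established; the second measures the $\x$-dependence of $T_{\x}$ at a fixed weight, where the two arguments of $\bfF$ differ only by $\boldsymbol{1}_C(\x_1-\x_2)^\top$ of Frobenius norm $\sqrt{C}\|\x_1-\x_2\|_2$, so it is bounded by $\|\inv\|_2\,\mathrm{Lip}(\F)\sqrt{C}\|\x_1-\x_2\|_2$. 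Absorbing the $L\|\theta(\x_1)-\theta(\x_2)\|_F$ term to the left gives
\[
\|\theta(\x_1)-\theta(\x_2)\|_F\le\frac{\|\inv\|_2\,\mathrm{Lip}(\F)\sqrt{C}}{1-L}\,\|\x_1-\x_2\|_2,
\]
which is the desired Lipschitz bound and is clean once $L$ is in hand.

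I expect the main obstacle to be the estimate of $\|\bfH-\bfH_0\|_2$ producing exactly the factor $\Delta t\sqrt{C}\,\|\bfa\|_2$ appearing in \eqref{eq:deltatcond}; all remaining steps are routine norm inequalities together with the Banach and uniform-contraction arguments. A secondary subtlety worth flagging is that the reduction to a fixed point, and hence the very appearance of $\|\inv\|_2$, rests on the probability-one invertibility of $\bfH'$, so the conclusion is almost-sure rather than deterministic.
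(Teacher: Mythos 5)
Your proposal is correct and follows essentially the same route as the paper: the same fixed-point map $T(\theta,\x)=\pseudo[\text{--}]$ (the paper writes $\inv\bfF\left(\boldsymbol{1}_C\x^{\top}+(\bfH-\bfH_0)\theta\right)$), the same bounds $\left\|\bfH-\bfH_0\right\|_F\leq\sqrt{C}\|\bfa\|_2\Delta t$ and $\left\|\bfF(\bfX)-\bfF(\bfY)\right\|_F\leq\mathrm{Lip}(\F)\left\|\bfX-\bfY\right\|_F$, and the same contraction constant and Lipschitz bound $\mathrm{Lip}(\theta)\leq\ell_{\x}/(1-\ell_{\theta})$. The only cosmetic difference is that where the paper cites a parameterized Banach contraction lemma \cite[Lemma 1.9]{FB-CTDS}, you prove that lemma inline via the standard splitting of $\theta(\x_1)-\theta(\x_2)$, and your closing remark that the conclusion is almost-sure (via invertibility of $\bfH'$ for $C=H$) matches the paper's own caveat.
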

We remark that the loss function in \eqref{eq:matrixEqn} using the Frobenius norm $\| \cdot \|_F$ is a reformulation of \eqref{eq:loss} in a matrix form. We now prove Theorem \ref{thm:existence} using a parameterized version of Banach Contraction Theorem presented in \cite[Lemma 1.9]{FB-CTDS}.
\begin{proof}
The requirement $\mathcal{L}(\theta,\x)=0$ implies that the ansatz $\tX_{\theta}\left(\x,\Delta t\right) = \boldsymbol{1}_C \x^\top + (\bfH-\overline{\bfH}) \theta$ and its derivative, $\tX_{\theta}'\left(\x,\Delta t\right)= \bfH' \theta$, satisfy the ODE \eqref{eq:odesubinterval}, $\dtX_{\theta}\left(\x,\Delta t\right)=\bfF\left(\tX_{\theta}\left(\x,\Delta t\right)\right)$, which can be equivalently written as $\bfH'\theta = \bfF\left(\boldsymbol{1}_C\x^{\top} + \left(\bfH-\overline{\bfH}\right)\theta\right)$. We introduce the fixed point map 
\[
T\left(\theta,\x\right) = \inv\bfF\left(\boldsymbol{1}_C\x^\top + \left(\bfH-\overline{\bfH}\right)\theta\right)\in\mathbb{R}^{H\times d},
\]
and, when not differently specified, we denote with $\mathrm{Lip}(f)$ the Lipschitz constant of a Lipschitz continuous function $f$ with respect to the $\ell^2$ norm. Since  $\mathrm{Lip}(\sigma)\leq 1$, we have
\[
\left\|\bfH-\overline{\bfH}\right\|_F^2=\sum_{c=1}^C\left\|\sigma\left(\bfa^{\top} t_{c}+\bfb^{\top}\right)-\sigma\left(\bfb^{\top}\right)\right\|_2^2\leq C\|\bfa\|_2^2(\Delta t)^2
\]
as $t_c\in (0,\Delta t)$. Furthermore,
\[
\left\|\bfF(\bfX)-\bfF(\bfY)\right\|_F^2=\sum_{c=1}^C\left\|\F(\bfX^{\top}\e_c)-\F(\bfY^{\top}\e_c)\right\|_2^2\leq \mathrm{Lip}\left(\F\right)^2\left\|\bfX-\bfY\right\|_F^2\]
for any $\bfX,\bfY\in\mathbb{R}^{C\times d}$. Setting $\ell_{\theta}= \left\|\inv\right\|_2\mathrm{Lip}(\F)\sqrt{C}\|\bfa\|_2\Delta t$ we conclude that $T(\cdot,\x)$ is Lipschitz continuous with constant $\ell_{\theta}<1$ for $\Delta t$ satisfying \eqref{eq:deltatcond}, as
\[
\left\|T(\theta_2,\x)-T(\theta_1,\x)\right\|_F \leq \left\|\inv\right\|_2\mathrm{Lip}(\F)\sqrt{C}\|\bfa\|_2\Delta t\left\|\theta_2-\theta_1\right\|_F\label{eq:contraction}=\ell_{\theta}\left\|\theta_2-\theta_1\right\|_F\nonumber
\]
for any $\theta_1,\theta_2\in \mathbb R^{H\times d}$. We note that the $2-$norm of $\inv$ can be used since for any pair of matrices $\mathbf{A},\mathbf{B}$ of compatible dimensions, it holds $\|\mathbf{A}\mathbf{B}\|_F\leq \|\mathbf{A}\|_2\|\mathbf{B}\|_F$. Furthermore, $T(\theta,\cdot)$ is Lipschitz continuous with Lipschitz constant given by $\ell_{\x}= \left\|\inv\right\|_2\mathrm{Lip}\left(\F\right)\sqrt{C}$, since
\begin{align*}
\left\|T\left(\theta,\x_2\right)-T\left(\theta,\x_1\right)\right\|_F&\leq \left\|\inv\right\|_2\mathrm{Lip}\left(\F\right)\left\|\boldsymbol{1}_C\left(\x_2-\x_1\right)^{\top}\right\|_F, \quad \forall \x_1,\x_2 \in \mathbb R^d\\
&= \left\|\inv\right\|_2\mathrm{Lip}\left(\F\right)\sqrt{C} \left\|\x_2-\x_1\right\|_2 
= \ell_{\x}\left\|\x_2-\x_1\right\|_2.
\end{align*}
By \cite[Lemma 1.9]{FB-CTDS}, we can hence conclude that, provided $\Delta t$ satisfies \eqref{eq:deltatcond}, there is a well-defined Lipschitz continuous function $\theta:\mathbb{R}^d\to\mathbb{R}^{H\times d}$, with
\[
\mathrm{Lip}\left(\theta\right)\leq \frac{\ell_\x}{1-\ell_\theta} = \frac{\left\|\inv\right\|_2\mathrm{Lip}\left(\F\right)\sqrt{C}}{1-\left\|\inv\right\|_2\mathrm{Lip}(\F)\sqrt{C}\|\bfa\|_2\Delta t},
\]
such that $\theta(\x)=T(\theta(\x),\x)$ for every $\x\in\mathbb{R}^{d}$, or equivalently $\mathcal{L}(\theta(\x),\x)=0$.
\end{proof}

\begin{proposition}[Convergence of the hybrid Parareal method]\label{pr:convergenceHybrid}
Consider the initial value problem \eqref{eq:ode} with $\F:\R^d\to\R^d$ a smooth Lipschitz continuous vector field. Suppose that the time interval $[0,T]$ is partitioned into $N$ intervals of size $\Delta t = T/N$ such that $\Delta t$ satisfies \eqref{eq:deltatcond} and choose the $C$ collocation points $0\leq t_{1}<\cdots<t_{C}\leq \Delta t$ to satisfy the Assumption \ref{ass:collocation}. Let $\sigma$ be a smooth $1-$Lipschitz function. Then for the coarse integrator $\coarse[\Delta t](\x) =\x+\theta(\x)^T\left(\sigma(\bfa \Delta t+\bfb)-\sigma(\bfb)\right)$ with $\theta(\x)$ as in Theorem \ref{thm:existence} and the fine integrator $\fine[\Delta t]=\phi_{\mathcal{F}}^{\Delta t}$, there exist positive constants $\alpha,\gamma,\beta$ such that, at the $i-$th iterate of the hybrid Parareal method, the following bound holds
\begin{equation}\label{eq:boundConvergenceHybrid}
\left\|\x(t_n) - \x_n^{{i}}\right\|_2 \leq \frac{\alpha}{\gamma}\frac{\left(\gamma(\Delta t)^{p+1}\right)^{{i}+1}}{({i}+1)!}\left(1+\beta\Delta t\right)^{n-{i}-1}\prod_{j=0}^{{i}}\left(n-j\right).
\end{equation}
\end{proposition}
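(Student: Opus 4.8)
The plan is to check that the RPNN-based coarse propagator $\coarse$ defined in the statement satisfies the two structural hypotheses \eqref{eq:cond1} and \eqref{eq:cond2} of Theorem \ref{thm:convergenceParareal}; since the fine integrator is taken to be the exact flow $\fine=\phi_{\F}^{\Delta t}$, the bound \eqref{eq:boundConvergenceHybrid} then follows by invoking that theorem verbatim. The two facts needed for this verification are already available: the existence and Lipschitz continuity of $\x\mapsto\theta(\x)$ from Theorem \ref{thm:existence}, and the quadrature-based a-posteriori estimate of Theorem \ref{thm:QuadAposteriori}.

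I would first verify the contraction-type bound \eqref{eq:cond2}. Writing $\coarse(\x)-\coarse(\y) = (\x-\y) + \bigl(\theta(\x)-\theta(\y)\bigr)^{\top}\bigl(\sigma(\bfa\Delta t+\bfb)-\sigma(\bfb)\bigr)$ and taking the $\ell^2$ norm gives $\|\coarse(\x)-\coarse(\y)\|_2 \le \|\x-\y\|_2 + \|\theta(\x)-\theta(\y)\|_F\,\|\sigma(\bfa\Delta t+\bfb)-\sigma(\bfb)\|_2$. Bounding the last factor by $\|\bfa\|_2\Delta t$ using $\mathrm{Lip}(\sigma)\le 1$, and the difference of weights by $\mathrm{Lip}(\theta)\|\x-\y\|_2$ from Theorem \ref{thm:existence}, the right-hand side is at most $\bigl(1 + \mathrm{Lip}(\theta)\|\bfa\|_2\,\Delta t\bigr)\|\x-\y\|_2$, so \eqref{eq:cond2} holds with $\beta = \mathrm{Lip}(\theta)\|\bfa\|_2$.

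Next I would verify \eqref{eq:cond1}. The key observation is that, by Theorem \ref{thm:existence}, the selected weights satisfy $\mathcal{L}(\theta(\x),\x)=0$, so the collocation residual in \eqref{eq:residualEpsilon} is exactly zero, i.e.\ $\varepsilon=0$. Applying Theorem \ref{thm:QuadAposteriori} with $\varepsilon=0$ at $t=\Delta t$, and using that $\fine=\phi_{\F}^{\Delta t}$ returns the exact solution at time $\Delta t$, yields $\|\fine(\x)-\coarse(\x)\|_2 \le \alpha(\Delta t)^{p+1}$, the quantitative bound in \eqref{eq:cond1}. For the accompanying power-series representation, I would argue that both maps are smooth in $\Delta t$: the flow $\phi_{\F}^{\Delta t}$ because $\F$ is smooth, and $\coarse$ because the matrices $\bfH,\bfH'$ depend smoothly on $\Delta t$ through the scaled collocation nodes and $\theta(\x)$ solves the smooth fixed-point equation $\theta=T(\theta,\x)$, so the implicit function theorem applied to the contraction of Theorem \ref{thm:existence} gives smooth dependence of $\theta$, hence of $\coarse$, on $\Delta t$. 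The $O((\Delta t)^{p+1})$ bound then forces the Taylor coefficients up to order $p$ to cancel, producing the expansion in \eqref{eq:cond1}.

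With both hypotheses in hand, Theorem \ref{thm:convergenceParareal} applies and delivers \eqref{eq:boundConvergenceHybrid}. The main obstacle is not the size bound, which drops out of the $\varepsilon=0$ case of Theorem \ref{thm:QuadAposteriori}, but the \emph{uniformity} of the constants over the base points $\x$ traversed by the Parareal iterates. Indeed, the constant $\bar\kappa$ hidden in $\alpha$ involves the $p$th time-derivative of $t\mapsto\mathcal{N}'_{\theta(\x)}(t;\x)-\F(\mathcal{N}_{\theta(\x)}(t;\x))$ and the flow-Jacobian bound $\delta$, both of which depend on $\x$ through $\theta(\x)$; likewise the coefficient $c_{p+1}$ in \eqref{eq:cond1} must be continuously differentiable in $\x$. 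Making these uniform requires confining the iterates to a bounded set and exploiting the smoothness of $\F$ together with the boundedness of $\theta$ on bounded sets guaranteed by its Lipschitz continuity; establishing this localization cleanly is the one genuinely delicate point of the argument.
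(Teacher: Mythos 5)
Your proposal follows essentially the same route as the paper's proof: verify hypothesis \eqref{eq:cond2} via the Lipschitz continuity of $\x\mapsto\theta(\x)$ with $\beta=\mathrm{Lip}(\theta)\|\bfa\|_2$, verify \eqref{eq:cond1} by noting that $\theta(\x)=T(\theta(\x),\x)$ makes the collocation residual vanish so Theorem \ref{thm:QuadAposteriori} with $\varepsilon=0$ gives $\|\fine(\x)-\coarse(\x)\|_2\leq\alpha(\Delta t)^{p+1}$, obtain the series expansion by smoothness and Taylor expansion in time, and then invoke Theorem \ref{thm:convergenceParareal}. Your closing remarks on the smooth dependence of $\theta$ on $\Delta t$ and on the uniformity in $\x$ of the constants $\alpha$ and of $c_{p+1},c_{p+2},\dots$ are in fact more careful than the paper, which asserts these points without elaboration.
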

\begin{proof}
The proof is based on showing that our network satisfies assumptions \eqref{eq:cond1} and \eqref{eq:cond2} of Theorem \ref{thm:convergenceParareal}. Theorem \ref{thm:existence} guarantees that, for $\Delta t$ satisfying \eqref{eq:deltatcond}, $\x \mapsto \theta(\x)$  is Lipschitz continuous with Lipschitz constant $\mathrm{Lip}\left(\theta\right)$. Further noting that $\left\|\sigma(\bb{a} \Delta t + \bb{b})-\sigma(\bb{b})\right\|_2 \leq \|\bfa\|_2\Delta t$ as $\mathrm{Lip}(\sigma)\leq 1$ we can write
\begin{align*}
\left\|\coarse\left(\x_2\right) - \coarse\left(\x_1\right)\right\|_2 &\leq 
\left\|\x_2-\x_1\right\|_2 + \|\bfa\|_2\,\Delta t\, \mathrm{Lip}\left(\theta\right)\left\| \x_2-\x_1\right\|_2\\
&=\left(1+\beta\Delta t\right)\left\|\x_2-\x_1\right\|_2
\end{align*}
for any $\x_1,\x_2 \in \mathbb R^d$, where $\beta:=\mathrm{Lip}\left(\theta\right)\|\bfa\|_2$, and hence condition \eqref{eq:cond2} is satisfied. Given that $\theta(\x)=T(\theta(\x),\x)$, as guaranteed by Theorem \ref{thm:existence}, satisfies the collocation conditions exactly, Theorem \ref{thm:QuadAposteriori} ensures that there exists $\alpha >0$ such that $\left\|\fine(\x)-\coarse(\x)\right\|_2\leq \alpha \left(\Delta t\right)^{p+1}$. Because of the smoothness of $\F$ and $\sigma$, one can also Taylor expand in time and guarantee the existence of continuously differentiable functions $c_{p+1},c_{p+2},\ldots$ such that
\[
\phi^{\Delta t}_{\F}(\x)-\ELM{\theta,\Delta t,0,{\x}}=c_{p+1}(\x)(\Delta t)^{p+1}+c_{p+2}(\x)(\Delta t)^{p+2}+\cdots .
\]
This allows concluding that \eqref{eq:cond1} holds, and the hybrid Parareal satisfies \eqref{eq:boundConvergenceHybrid}.
\end{proof}

As for the classical Parareal method, at the $n$th iterate, our hybrid Parareal method with the exact fine integrator replicates the analytical solution at the time instants $t_0,\ldots,t_n$. 

{In practice, as presented in the previous section, we do not have access to the function $\x\mapsto \theta(\x)$, but we only approximate its value at the points involved in the hybrid  Parareal iterates, i.e., $\theta_n^i\approx \theta(\x_n^i)$. Let us denote by $\hat{\theta}:\R^d\to\R^{H\times d}$ the function approximating $\theta$, so that $\theta_n^i=\hat{\theta}(\x_n^i)$. This function is typically provided by a convergent iterative method minimizing \eqref{eq:matrixEqn}.} Under the smoothness assumptions of Proposition \ref{pr:convergenceHybrid} and supposing the map $x\mapsto \hat{\theta}(x)$ is Lipschitz continuous, i.e., the adopted optimization method depends regularly on the parameter $\x\in\mathbb{R}^d$, the convergence in Proposition \ref{pr:convergenceHybrid} also holds for the approximate case. To see this, note that condition \eqref{eq:cond1} also holds for the approximate case as long as $\F$ is smooth enough and the collocation conditions are solved sufficiently accurately. In practice, based on \eqref{eq:residualEpsilon}, it suffices to have
\[
\max_{c=1,\ldots,C}\left\|\left(\dtX_{\theta_n^i}\left(\x,\Delta t\right) - \bfF\left(\tX_{\theta_n^i}\left(\x,\Delta t\right)\right)\right)^{\top}\e_c\right\|_2\leq \tilde{\alpha}\left(\Delta t\right)^{p+1}
\]
for an $\tilde{\alpha}>0$, and every $n=0,\ldots,N-1$ and iterate $i$. Furthermore, assumption \eqref{eq:cond2} follows from the Lipschitz regularity of the approximate function $\hat{\theta}$. 

\section{Numerical results}\label{se:experiments}
This section collects several numerical tests that support our theoretical derivations. We consider six dynamical systems, four of which come from the experimental section in \cite{gander2008nonlinear}, to which we add the SIR model and the ROBER problem. We assume that, for each of these systems, a single initial value problem is of interest and explore how RPNN-based coarse propagators perform for that initial value problem. For the one-dimensional Burgers' equation, we consider a semi-discretization with centered finite differences and provide the experimental results for different initial conditions, imposing homogeneous Dirichlet boundary conditions on the domain $[0,1]$.

The chosen fine integrators are classical Runge--Kutta methods with a smaller timestep than the coarse one $\Delta t$.  More explicitly, we assume that the coarse timestep $\Delta t$ is a multiple of the fine timestep $\delta t$ {and one coarse integrator step $\Delta t$, corresponds to $\Delta t/\delta t$ steps of the size $\delta t$ of the fine integrator. In all experiments, we use equispaced time collocation points, and for the Lorenz system, we also use Lobatto points. For stiff problems such as Burgers' and ROBER's, we use the implicit Euler method (IE), with update $\x_{n+1} = \x_n + \delta t \F(\x_{n+1})$, as a fine integrator, while for the others we found Runge-Kutta (RK4),
\[
\x_{n+1}= \x_n + \frac{\delta t}{6}\left(k_1+2k_2+2k_3+k_4\right)
\] 
with
\[
k_1=\F\left(\x_n\right),\,\, k_2=\F\left(\x_n+\delta t \frac{k_1}{2}\right),\,\, k_3 = \F\left(\x_n+\delta t \frac{k_2}{2}\right),\,\, k_4=\F\left(\x_n+\delta t k_3\right).
\]
to provide accurate solutions with moderately small step sizes. We specify the adopted timesteps in the dedicated sections below. 

The purpose of this paper is to demonstrate that our hybrid Parareal method based on RPNNs is theoretically motivated and practically effective, rather than the high-performance implementation. Thus, most of our experiments are run on a single processor where the parallel speedup would result from parallel execution of the fine integrators on the sub-intervals, in proportion to the number of cores used. To demonstrate the principle in hardware we run the ROBER's problem on five processors available to us and compare to the serial application of the fine integrator, however Parareal benefits will scale up with the problem size and number of cores. For Burgers' equation, we again use five processors for convenience since this allows us to do 100 repeated experiments faster.

In all plots, the label \say{para} refers to the hybrid methodology with neural networks as coarse propagators, while \say{ref} to the reference solution, obtained by the sequential application of the fine solver. We always plot the piecewise smooth Hybrid Parareal approximant constructed as \eqref{eq:piecewise}. We run the Hybrid Parareal until the difference between two consecutive iterates was at most $\texttt{tol}=10^{-4}$. As a safeguard, we put a hard limit, $\texttt{max\_it}=20$, on the iteration number, which was, however, not triggered in any of our experiments. All experiments were run on a MacBook Pro 2020 with Intel i5 processor and all the computational times were averaged over 100 runs per experiment. 
For each experiment, we report an average time per update of the coarse integrator on a sub-interval, which is also averaged over the number of sub-intervals. We measure the timing when computing the zeroth iterate in lines 3-6 of Algorithm \ref{alg:hybridParareal} to isolate the effects of warm starts used in Parareal update in later iterations. We also report a total average time to compute the solution, including the above mentioned coarse integrator updates along with possibly parallel execution of the fine step integrators.
\subsection{SIR}
The SIR model is one of the simplest systems considered in mathematical biology to describe the spread of viral infections. {SIR consists of three coupled ODEs for $\x = [x_1,x_2,x_3]^{\top}$ with parameters $\beta = 0.1$, and $\gamma = 0.1$:}
\begin{equation}\label{eq:sir}
\begin{cases}
x'_1\left(t\right) = -\beta x_1\left(t\right)x_2\left(t\right),\\
x'_2\left(t\right) = \beta x_1\left(t\right)x_2\left(t\right)-\gamma x_2\left(t\right),\\
x'_3\left(t\right) = \gamma x_2\left(t\right),\\
\x\left(0\right)=\begin{bmatrix}0.3 & 0.5 & 0.2\end{bmatrix}^{\top}.
\end{cases}
\end{equation}
\begin{figure}[ht!]
\centering
\includegraphics[width=.65\textwidth]{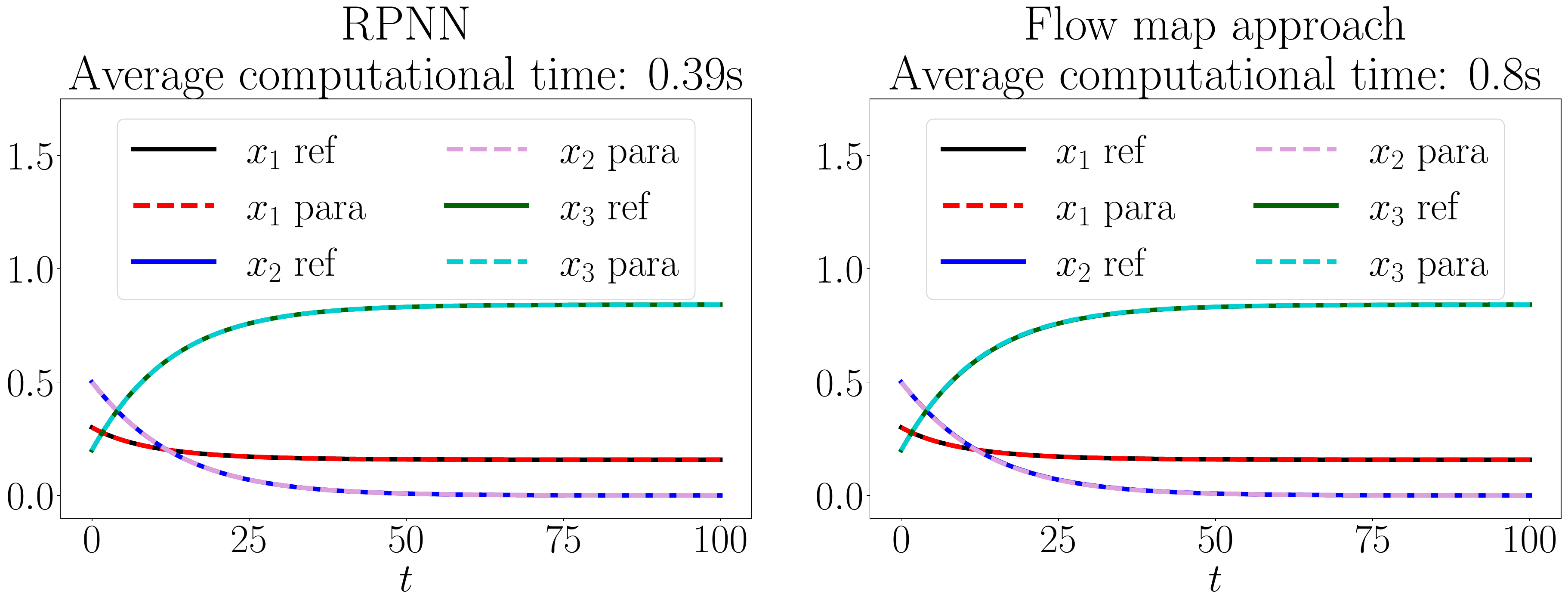}
\caption{SIR: Hybrid Parareal solution with (left) a RPNN-based coarse propagator, (right) flow map coarse propagator.}
\label{fig:paraSIR}
\vspace{-\baselineskip}
\end{figure}
We use this example to compare two different types of coarse propagators, the RPNN-based approach with a neural operator-type \textit{flow map} trained to approximate the solutions of the dynamical system described by \eqref{eq:sir} for initial conditions in the compact set $\Omega = [0,1]^3$, and times in $[0,1]$, see also \cite{flamant2020solving,wang2023long}.
Given that the Parareal method needs to evaluate the coarse propagator on several initial conditions, the learned flow map is the most natural neural network-based alternative, while a standard  Physics Informed Neural Network, which needs to be fitted for each initial condition, would be computationally too expensive. {Both coarse propagators use the same coarse timestep $\Delta t= 1$, while the fine solver timestep is $\delta t= 10^{-2}$.} {The piecewise smooth approximations computed with both methods are plotted in Figure \ref{fig:paraSIR}. We report the corresponding timings in Table \ref{tab:timeSIR}.}
\begin{table}[ht!]
    \centering
    \begin{tabular}{|l||c|c|}
    \hline
     Timing breakdown & RPNN & Flow \\
         \hline\hline
         Offline training phase & 0s & $\sim$20 minutes \\
         \hline
         Average cost coarse step in the zeroth iterate & 0.0009773s & 0.0002729s\\
         \hline
         Total  &  0.3940s &  0.8047s\\
         \hline
    \end{tabular}
    \caption{{SIR: Computational time for the RPNN and flow map based Hybrid Parareal variants on a single core.}}
    \label{tab:timeSIR}
    \vspace{-\baselineskip}
\end{table}

The RPNN-based approach took an average of 0.3940 seconds to {converge to} the final solution over 100 repeated experiments, while the flow map approach took an average time of 0.8047 seconds. The reason behind the higher cost of the flow map approach is that RPNNs minimize the residual more accurately than the flow map approach since they are trained for the specific initial conditions of interest, leading to a faster convergence of the Parareal method. If the offline training phase is accounted for, about 20 more minutes must be considered for the flow map approach, while no offline training is required for the RPNN-based approach. The offline training cost depends on the chosen architecture and training strategy. These details are provided in Section \ref{se:FlowMapApproach} of the supplementary material.

Given the reported results, it is clear that while both methods are comparable in terms of accuracy, the distribution of the costs is considerably different. The flow map approach has a high training cost and a low evaluation cost {but is also less accurate hence needing more Parareal iterations. On the other hand, the RPNN strategy, having no offline training phase and yielding more accurate solutions and hence needing fewer Parareal iterations}, saves substantial time. For this reason, we will only focus on the RPNN-based approach in the following experiments.

\subsection{ROBER}
The ROBER problem is a prototypical stiff system of coupled ODEs with parameters $k_1 = 0.04$, $k_2=3\cdot 10^7$, and $k_3 = 10^4$, 
\begin{equation}
\begin{cases}
x_1'\left(t\right) = -k_1 x_1\left(t\right)+k_3x_2\left(t\right)x_3\left(t\right),\\
x_2'\left(t\right) = k_1x_1\left(t\right)-k_2x_2^2\left(t\right)-k_3x_2\left(t\right)x_3\left(t\right),\\ x_3'\left(t\right) = k_2x_2^2\left(t\right),\\
\x\left(0\right) = \begin{bmatrix} 1 & 0 & 0 \end{bmatrix}^{\top}.
\end{cases}
\end{equation}

{As ROBER's solution spikes for short times, the usual approach is to discretize the time non-uniformly. Therefore we choose the coarse step size to be $\Delta t = 10^{-2}$ for times in $[0,1]$ and $\Delta t=3$ for times in $[1,100]$. The fine integrator timestep is $\delta t= 10^{-4}$.} We remark that ROBER's problem is commonly solved using a variable step-size method, for example, based on an embedded Runge-Kutta method \cite[Section II.4]{hairer1993solving}. {Fixing the step size allows us to understand how} the proposed hybrid method performs on stiff equations {without extra complication of step adaptivity. A variable step Parareal method (regardless if the coarse propagator is learned or classical), would involve adaptivity in both coarse and fine step and is beyond scope of this work.}
\begin{table}[ht!]
    \centering
  \begin{tabular}{|l || c| c|} 
    \hline
         Timing breakdown & RPNN  & Sequential IE, $\delta t$\\
         \hline\hline
         Average cost coarse step in the zeroth iterate & 0.001881s & \\
         \hline
         Average cost to produce the solution & 179.8280s &  263.2613s\\
         \hline
    \end{tabular}
    \caption{ROBER: Computational time for Hybrid Parareal using five cores versus sequential application of IE with fine step $\delta t$.}
    \label{tab:timeROBER}
    \vspace{-\baselineskip}
\end{table}
\begin{figure}[ht!]
\centering
\includegraphics[width=.5\textwidth]{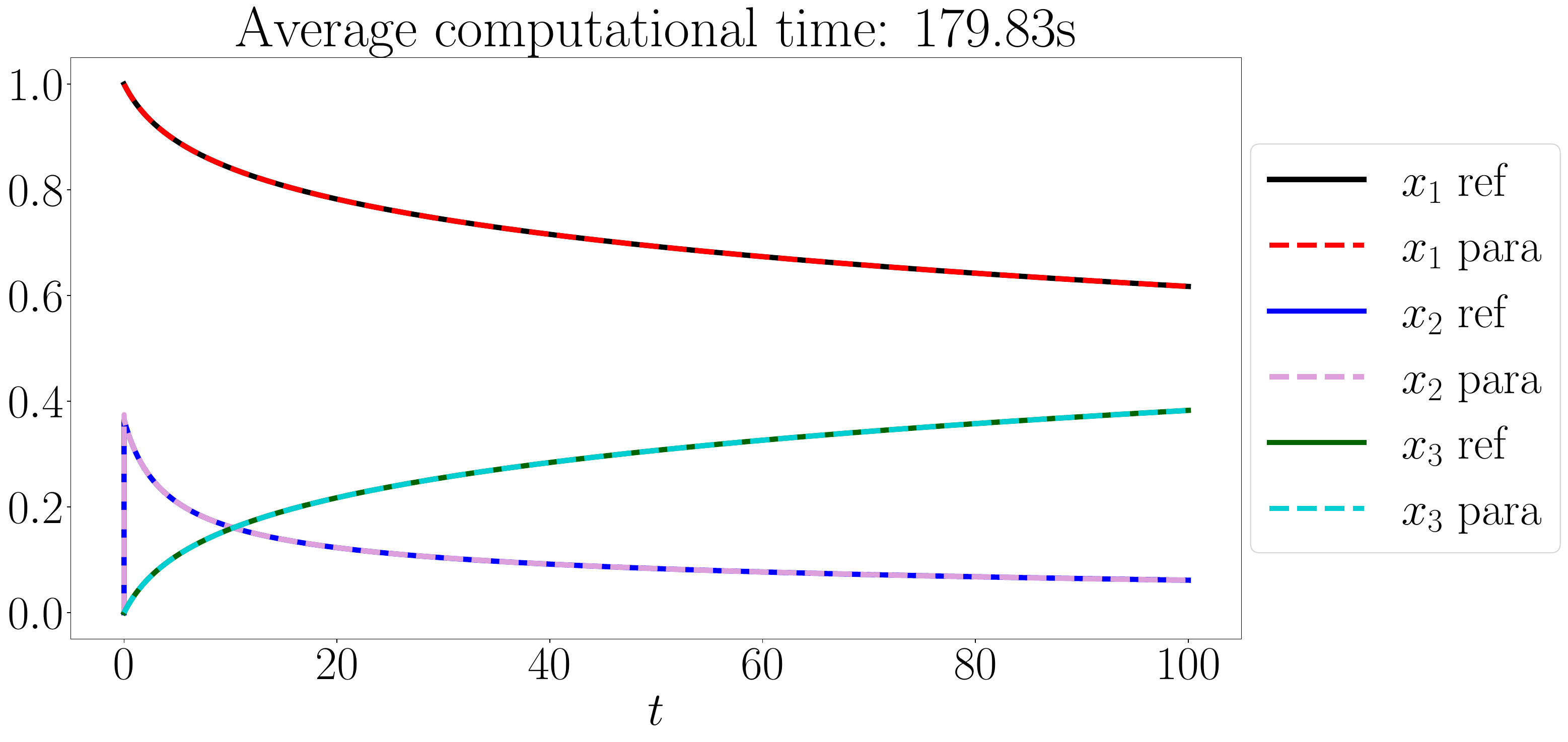}
\vspace{-\baselineskip}
\caption{{ROBER: Components of the
Hybrid Parareal solution. To plot all components on the same scale, $\x_2$ was scaled by a factor of $10^4$.}}
\label{fig:paraROBER}
\vspace{-\baselineskip}
\end{figure}
We report the obtained approximate solutions in Figure \ref{fig:paraROBER} and the timings in Table~\ref{tab:timeROBER}. {In these experiments, the fine integrators were executed in parallel on five cores. Thus, the total average time to compute the solution reflects the parallel speed up, albeit for a small number of cores. Given this stiff problem requires an implicit fine integrator, we expect the computational costs of the update of the coarse integrator, $0.001881$s, and one step of the fine integrator, $0.000263$s, to be closer than when using an explicit scheme as it is the case in the remaining examples. Additionally, to cover one coarse step, the fine integrator needs to perform at least 100 steps, given our choices for $\delta t$ and $\Delta t$. These respective costs help to optimally balance the choice of the number of sub-intervals versus the number of fine steps in each sub-interval, along with practical considerations like the number of cores available.}

\subsection{Lorenz}\label{se:lorenz}
For weather forecasts, real-time predictions are paramount, rendering parallel-in-time solvers highly relevant in this context. Lorenz's equations
\begin{equation}
\begin{cases}
x_1'\left(t\right)=-\sigma x_1\left(t\right) +\sigma x_2\left(t\right),\\
x_2'\left(t\right)=-x_1\left(t\right)x_3\left(t\right)+rx_1\left(t\right)-x_2\left(t\right),\\
x_3'\left(t\right)= x_1\left(t\right)x_2\left(t\right)-bx_3\left(t\right),\\
\x\left(0\right)=\begin{bmatrix} 20 & 5 & -5 \end{bmatrix}^{\top},
\end{cases} 
\end{equation}
describe one simple model for weather prediction.
Different parameter values give rise to considerably different trajectories for this system. We set $\sigma=10$, $r=28$, and $b=8/3$ to have chaotic behavior. We compute an approximate solution up to time $T=10$, using RPNNs as a coarse propagator with $\Delta T=T/250$ and RK4 with step $\delta t=T/14500$ as a fine integrator.
\begin{figure}[ht!]
\centering
\includegraphics[width=.65\textwidth]{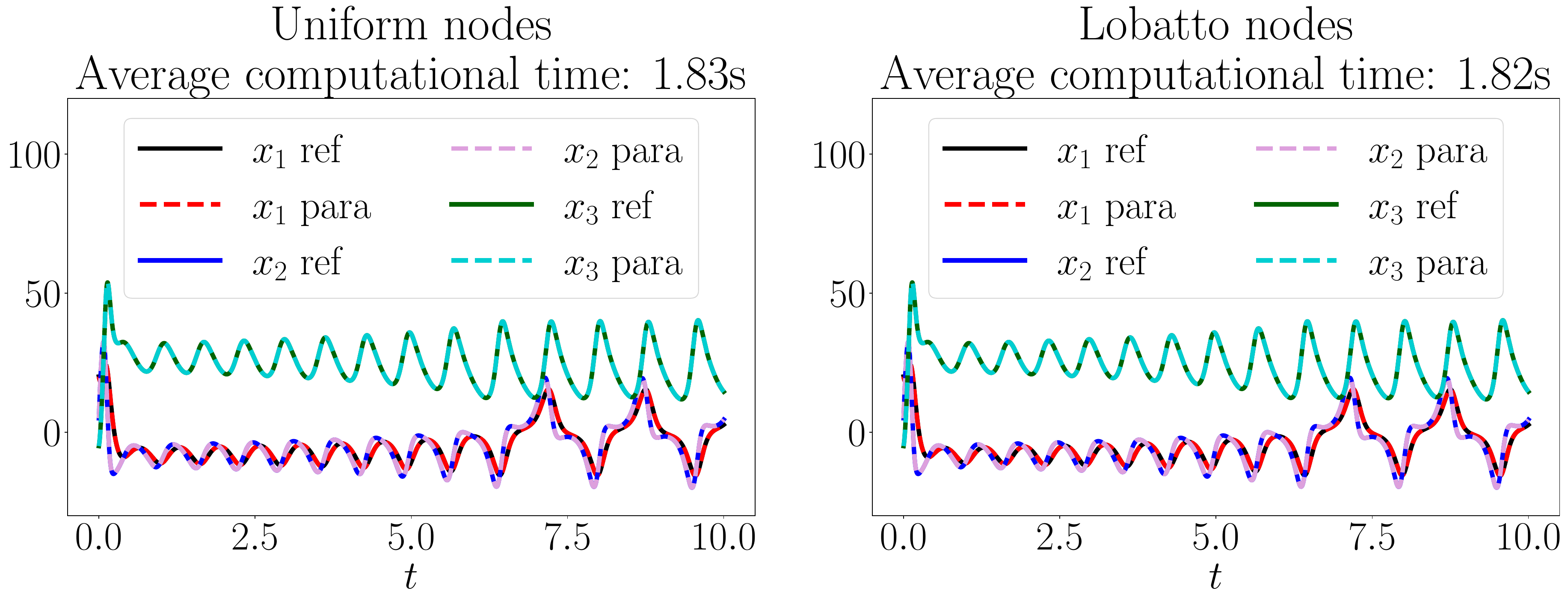}
\caption{Lorenz: Hybrid Parareal solution with (left) uniform collocation points, (right) Lobatto collocation points.}
\label{fig:paraLorenz}
\vspace{-\baselineskip}
\end{figure}

To show that the algorithm is not overly sensitive to the choice of the collocation points, we repeated the simulations using Lobatto collocation points. The qualitative behavior of the produced solutions for one choice of trained weights is reported in Figure \ref{fig:paraLorenz} and the corresponding timings} in Table \ref{tab:timeLorenz}. Although the Lorenz system is chaotic, the proposed hybrid solver provides an accurate approximate solution on the considered interval. Additionally, the average cost of one evaluation of the coarse RPNN-based integrator does not appear to depend strongly on the system's complexity but mostly on its dimension. Indeed, the average cost of one RPNN evaluation is comparable with the one for the SIR problem, see Table~\ref{tab:timeSIR}.
\begin{table}[ht!]
    \centering
    \begin{tabular}{|l ||c|c|}
    \hline
         Timing breakdown  & Uniform & Lobatto \\
         \hline\hline
         Average cost coarse step in the zeroth iterate &0.0009430s  & 0.0009371s\\
         \hline
         Average cost to produce the solution & 1.8312s & 1.8184s \\
         \hline
    \end{tabular}
    \caption{Lorenz: Computational time for the RPNN-based Hybrid Parareal with uniform and Lobatto nodes on a single core.}
    \label{tab:timeLorenz}
    \vspace{-\baselineskip}
\end{table}
\subsection{Arenstorf orbit}
The three-body problem is a well-known problem in physics that pertains to the time evolution of three bodies interacting because of their gravitational forces. Changing the ratios between the masses, their initial conditions, and velocities, can starkly alter the system's time evolution, and many configurations have been thoroughly studied. One of them is the stable Arenstorf orbit, which arises when one of the masses is negligible and the other two masses orbit in a plane. The equations of motion for this specific instance of the three-body problem are
\begin{equation}\label{eq:arenstorf}
\begin{cases}
x^{''}_1\left(t\right) = x_1\left(t\right)+2x_2'\left(t\right)-b\frac{x_1+a}{D_1}-a\frac{x_1'\left(t\right)-b}{D_2},\\
x^{''}_2\left(t\right) = x_2\left(t\right)-2x_1'\left(t\right)-b\frac{x_2(t)}{D_1}-a\frac{x_2(t)}{D_2},\\
\begin{bmatrix} x_1\left(0\right) & x_1'\left(0\right) & x_2\left(0\right) & x_2'\left(0\right)\end{bmatrix}^{\top} =\begin{bmatrix} 0.994& 0 & 0  & v_2^0 \end{bmatrix}^{\top},
\end{cases} 
\end{equation}
\[
D_1 = \left(\left(x_1\left(t\right)+a\right)^2+x_2\left(t\right)^2\right)^{3/2},\quad
D_2 = \left(\left(x_1\left(t\right)-b\right)^2+x_2\left(t\right)^2\right)^{3/2},
\]
$v_2^0=-2.00158510637908252240537862224$, $a=0.12277471$, and $b=1-a$. This configuration leads to a periodic orbit of period $17.06521656015796$ \cite{hairer1993solving}. In practice, we transform \eqref{eq:arenstorf} into a first order system via the velocity variables $v_1(t):=x_1'(t)$ and $v_2(t):=x_2'(t)$. We include the plot of the obtained solution for time up to  $T=17$ and timesteps $\Delta t=T/125$, and $\delta t = T/80000$, in Figure \ref{fig:paraArenstorf} and the timings in Table~\ref{tab:timeArenstorf}.
\begin{figure}[ht!]
\centering
\includegraphics[width=.65\textwidth]{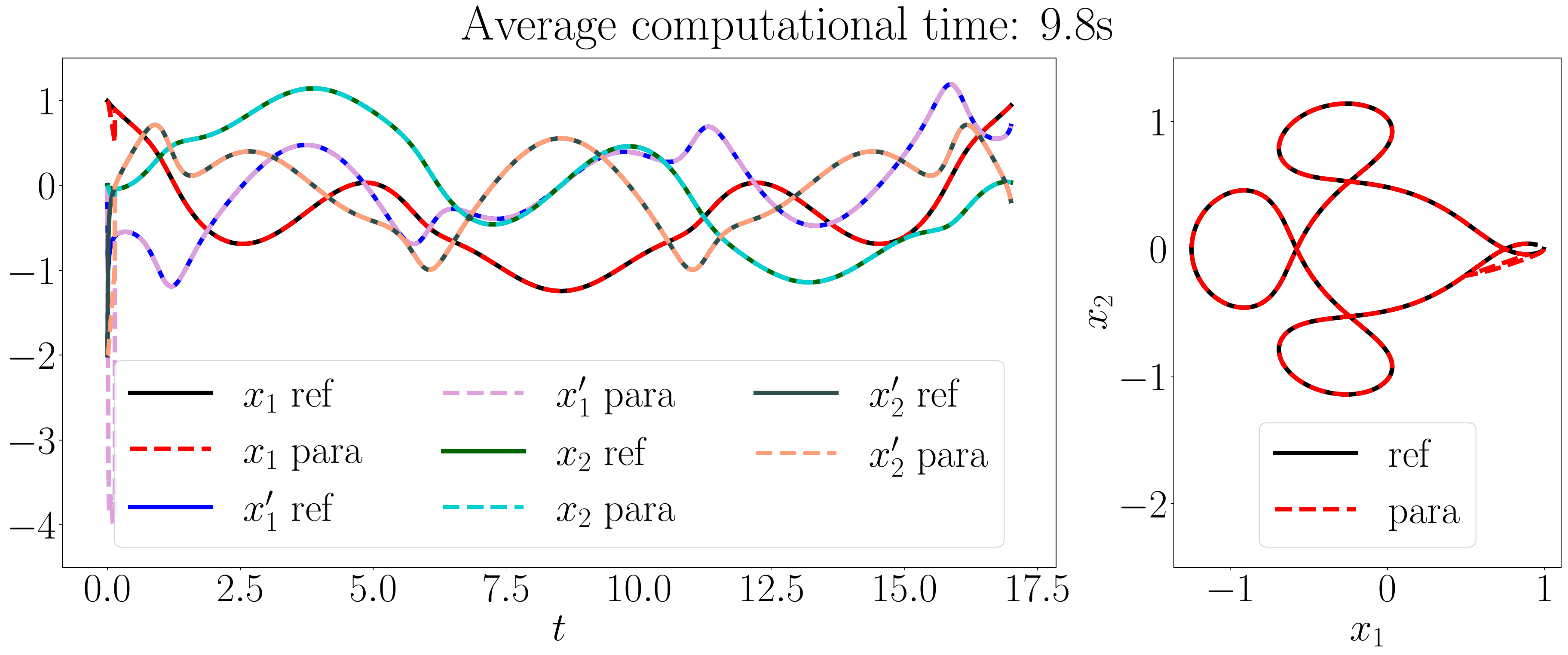}
\caption{{Arenstorf: Components of the
Hybrid Parareal solution (left), and the orbit of the initial condition (right).}}
\label{fig:paraArenstorf}
\vspace{-\baselineskip}
\end{figure}

This experiment serves to illustrate the benefits of using a Parareal-like correction of the neural network-based solution. Indeed, the approximate solution for short times does not accurately follow the correct trajectory. One possible remedy would be to restrict the step size $\Delta t$ as was done for the ROBER's problem. However, even for this larger time step choice, after just one step $\Delta t$, the Parareal correction resets the initial condition for the next interval bringing the solution back onto the stable orbit. Thus, not relying solely on a network-based solution allows us to compute an accurate solution for the later times, even though initially the solution departs the orbit.
\begin{table}[ht!]
    \centering
    \begin{tabular}{|l ||c|}
    \hline
         Timing breakdown  & RPNN\\
         \hline\hline
         Average cost coarse step in the zeroth iterate &  0.001912s\\
         \hline
         Average cost to produce the solution & 9.7957s \\
         \hline
    \end{tabular}
    \caption{{Arenstorf: Computational time for Hybrid Parareal using a single core.}}
    \label{tab:timeArenstorf}
    \vspace{-\baselineskip}
\end{table}

\subsection{Viscous Burgers' equation}\label{se:burgers}

Most of the systems considered up to now are low-dimensional. A natural way to test the method's performance on higher-dimensional systems is to work with spatial semi-discretizations of PDEs, where the mesh over which the spatial discretization is defined determines the system's dimension. We consider the one-dimensional Burgers' equation
\begin{equation}\label{eq:burgers}
\begin{cases}
\partial_t u\left(x,t\right) + u\left(x,t\right)\partial_x u\left(x,t\right) = \nu\partial_{xx}u\left(x,t\right),\,\quad x\in\Omega= \left[0,1\right],\\
u\left(x,0\right)=\sin{\left(2\pi x\right)},\,\quad x\in \Omega,\\
u\left(0,t\right)=u\left(1,t\right)=0,\,\quad t\geq 0.
\end{cases}
\end{equation}
In this section, we only report the results for the initial condition in Equation \eqref{eq:burgers}, but we include results for two more choices of initial conditions in Section \ref{app:burgers} of the supplementary material. All the experiments were run on five cores. In all tests we work with viscosity parameter $\nu = 1/50$, a uniform spatial grid of $51$ points in $\Omega=[0,1]$ and
coarse and fine step sizes $\Delta t = 1/50$ and $\delta t = 1/500$, respectively. The spatial semi-discretization with centered finite differences writes
\[
\begin{cases}
    \mathbf{u}'\left(t\right) = -\mathbf{u}\left(t\right)\odot \left(D_1\mathbf{u}\left(t\right)\right) + \nu D_2\mathbf{u}\left(t\right),\\
    \mathbf{u}\left(0\right)=\sin{\left(2\pi \x\right)}\in\mathbb{R}^{51},
\end{cases}
\]
where $\x=\begin{bmatrix} x_0 & x_1 & \dots & x_{50}\end{bmatrix}^{\top}$, $\x_i = i\Delta x$, $\Delta x = 1/50$, $i=0,\ldots,50$, $\odot$ is the component-wise product, and $D_1,D_2\in\mathbb{R}^{51\times 51}$ are the centered finite difference matrices of first and second order, respectively, suitably corrected to impose the homogeneous Dirichlet boundary conditions on $t\mapsto \mathbf{u}(t)$.
\begin{figure}[ht!]
\centering
\includegraphics[width=.7\textwidth]{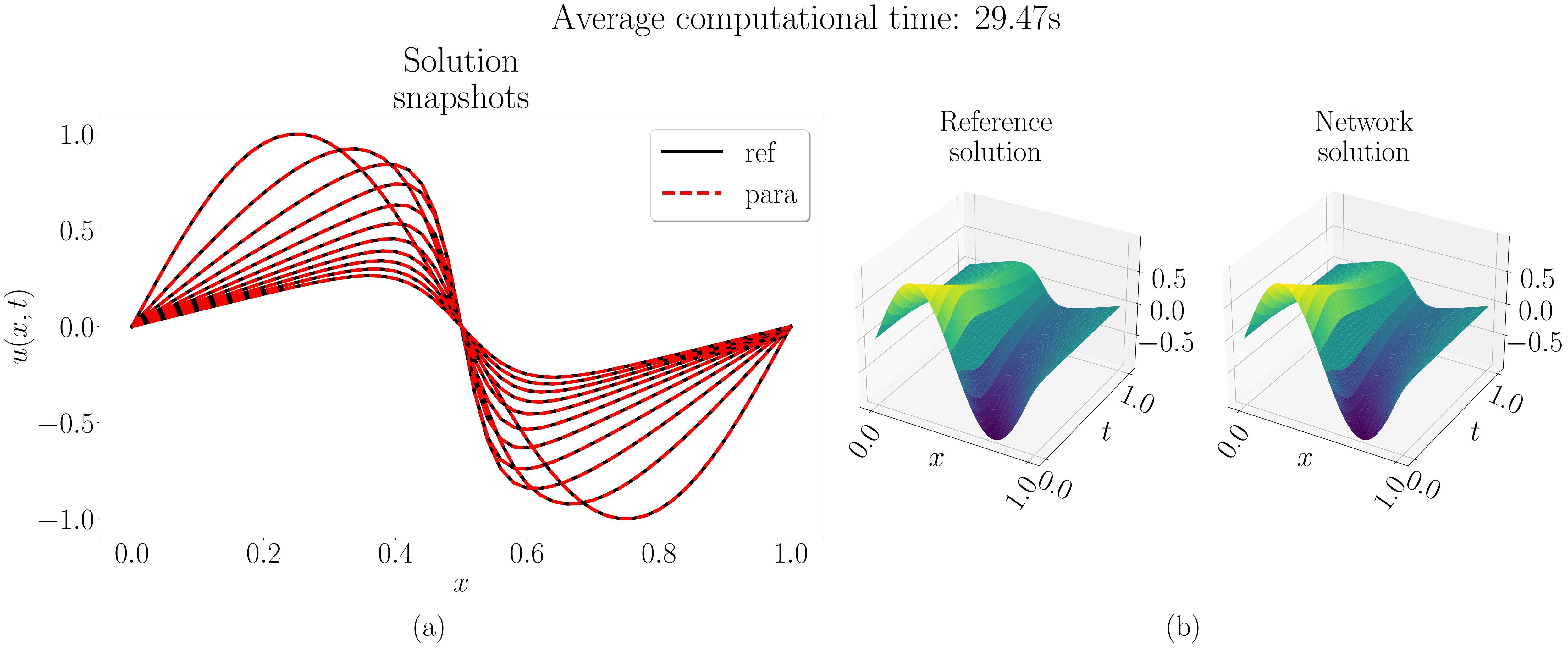}
\caption{{Burgers: Snapshots of the solution obtained with Hybrid Parareal (left), comparison of the solution surfaces between Hybrid Parareal and the fine integrator applied serially (right). Solution corresponding to $u_0(x)=\sin{(2\pi x)}$.}}
\label{fig:paraBurgersSingleWave}
\vspace{-\baselineskip}
\end{figure}

We report the qualitative behavior of the solutions in Figure \ref{fig:paraBurgersSingleWave}. Subfigure (a) tracks the solution at ten equally spaced time instants in the interval $[0,T=1]$. Subfigure (b) shows the solution surfaces obtained with the IE method on the left and the hybrid Parareal for one set of trained parameters on the right. We include the timings in Table \ref{tab:timeBurgersSingleWave}. We observe that the cost of the presented hybrid Parareal method grows with the dimensionality $d$ of the problem. However, we remark that for each of the $51$ components of the solution, we adopted only $H=5$ coefficients.

\begin{table}[ht!]
\centering
\begin{tabular}{|l ||c|}
\hline
Timing breakdown  & RPNN\\
\hline\hline
Average cost coarse step in the zeroth iterate &  0.2098s\\
\hline
Average cost to produce the solution & 29.4740s \\
\hline
\end{tabular}
\caption{{Burgers: Computational time for Hybrid Parareal using five cores, and initial condition $u_0(x)=\sin{(2\pi x)}$.}}
\label{tab:timeBurgersSingleWave}
\vspace{-\baselineskip}
\end{table}
\section{Conclusions and future extensions}\label{se:conclusions}
{In this manuscript, we proposed a hybrid parallel-in-time algorithm to solve initial value problems using a neural network as a coarse propagator within the Parareal framework. We derived an a-posteriori error estimate for generic neural network-based approximants. Based on these theoretical results we defined a hybrid Parareal algorithm involving RPNNs as coarse propagators which inherits the theoretical guarantees of the Parareal algorithm.}

We compared our hybrid Parareal solver based on RPNNs with one based on the flow map approach {on} the SIR problem. We demonstrated that our approach led to lower computational costs and no offline training phase. {We reserve the judgment of flow map performance.} However, we also tested it for other examples, {including} the Brusselator, where we noticed that the offline training phase can be very {intricate} because one has to first identify a forward invariant subset $\Omega$ of $\mathbb{R}^d$.

The most promising extension of this work is to include a mechanism allowing for time-adaptivity in the algorithm, i.e., for coarsening or {refinement of} the temporal grid based on the local behavior of the solution. {It would also be interesting to test our approach on higher-dimensional systems with high-performance computing hardware.}

\section*{Acknowledgments}
The authors would like to thank the Isaac Newton Institute for Mathematical Sciences for support and hospitality during the programme Mathematics of deep learning (MDL) when work on this paper was undertaken. This work was supported by: EPSRC grant number EP/R014604/1.
This work was partially supported by a grant from the Simons Foundation (DM).

\bibliography{references}


\begin{thebibliography}{41}
\ifx \bisbn   \undefined \def \bisbn  #1{ISBN #1}\fi
\ifx \binits  \undefined \def \binits#1{#1}\fi
\ifx \bauthor  \undefined \def \bauthor#1{#1}\fi
\ifx \batitle  \undefined \def \batitle#1{#1}\fi
\ifx \bjtitle  \undefined \def \bjtitle#1{#1}\fi
\ifx \bvolume  \undefined \def \bvolume#1{\textbf{#1}}\fi
\ifx \byear  \undefined \def \byear#1{#1}\fi
\ifx \bissue  \undefined \def \bissue#1{#1}\fi
\ifx \bfpage  \undefined \def \bfpage#1{#1}\fi
\ifx \blpage  \undefined \def \blpage #1{#1}\fi
\ifx \burl  \undefined \def \burl#1{\textsf{#1}}\fi
\ifx \doiurl  \undefined \def \doiurl#1{\url{https://doi.org/#1}}\fi
\ifx \betal  \undefined \def \betal{\textit{et al.}}\fi
\ifx \binstitute  \undefined \def \binstitute#1{#1}\fi
\ifx \binstitutionaled  \undefined \def \binstitutionaled#1{#1}\fi
\ifx \bctitle  \undefined \def \bctitle#1{#1}\fi
\ifx \beditor  \undefined \def \beditor#1{#1}\fi
\ifx \bpublisher  \undefined \def \bpublisher#1{#1}\fi
\ifx \bbtitle  \undefined \def \bbtitle#1{#1}\fi
\ifx \bedition  \undefined \def \bedition#1{#1}\fi
\ifx \bseriesno  \undefined \def \bseriesno#1{#1}\fi
\ifx \blocation  \undefined \def \blocation#1{#1}\fi
\ifx \bsertitle  \undefined \def \bsertitle#1{#1}\fi
\ifx \bsnm \undefined \def \bsnm#1{#1}\fi
\ifx \bsuffix \undefined \def \bsuffix#1{#1}\fi
\ifx \bparticle \undefined \def \bparticle#1{#1}\fi
\ifx \barticle \undefined \def \barticle#1{#1}\fi
\bibcommenthead
\ifx \bconfdate \undefined \def \bconfdate #1{#1}\fi
\ifx \botherref \undefined \def \botherref #1{#1}\fi
\ifx \url \undefined \def \url#1{\textsf{#1}}\fi
\ifx \bchapter \undefined \def \bchapter#1{#1}\fi
\ifx \bbook \undefined \def \bbook#1{#1}\fi
\ifx \bcomment \undefined \def \bcomment#1{#1}\fi
\ifx \oauthor \undefined \def \oauthor#1{#1}\fi
\ifx \citeauthoryear \undefined \def \citeauthoryear#1{#1}\fi
\ifx \endbibitem  \undefined \def \endbibitem {}\fi
\ifx \bconflocation  \undefined \def \bconflocation#1{#1}\fi
\ifx \arxivurl  \undefined \def \arxivurl#1{\textsf{#1}}\fi
\csname PreBibitemsHook\endcsname

\bibitem[\protect\citeauthoryear{Hairer et~al.}{}]{hairer1993solving}
\begin{botherref}
\oauthor{\bsnm{Hairer}, \binits{E.}},
\oauthor{\bsnm{N{\o}rsett}, \binits{S.P.}},
\oauthor{\bsnm{Wanner}, \binits{G.}}:
Solving {O}rdinary {D}ifferential {E}quations {I}, {N}onstiff {P}roblems.
Springer,
(1993)
\end{botherref}
\endbibitem

\bibitem[\protect\citeauthoryear{Lions et~al.}{2001}]{LionsEtAl2001}
\begin{barticle}
\bauthor{\bsnm{Lions}, \binits{J.-L.}},
\bauthor{\bsnm{Maday}, \binits{Y.}},
\bauthor{\bsnm{Turinici}, \binits{G.}}:
\batitle{A "parareal" in time discretization of {PDE}'s}.
\bjtitle{Comptes Rendus de l'Académie des Sciences - Series I - Mathematics}
\bvolume{332},
\bfpage{661}--\blpage{668}
(\byear{2001})
\end{barticle}
\endbibitem

\bibitem[\protect\citeauthoryear{Emmett and Minion}{2012}]{Emmett2012}
\begin{barticle}
\bauthor{\bsnm{Emmett}, \binits{M.}},
\bauthor{\bsnm{Minion}, \binits{M.L.}}:
\batitle{Toward an efficient parallel in time method for partial differential
  equations}.
\bjtitle{Communications in Applied Mathematics and Computational Science}
\bvolume{7}(\bissue{1}),
\bfpage{105}--\blpage{132}
(\byear{2012})
\end{barticle}
\endbibitem

\bibitem[\protect\citeauthoryear{Gander and
  G\"{u}ttel}{2013}]{gander2013paraexp}
\begin{barticle}
\bauthor{\bsnm{Gander}, \binits{M.J.}},
\bauthor{\bsnm{G\"{u}ttel}, \binits{S.}}:
\batitle{{PARAEXP}: {A} parallel integrator for linear initial-value problems}.
\bjtitle{SIAM Journal on Scientific Computing}
\bvolume{35}(\bissue{2}),
\bfpage{123}--\blpage{142}
(\byear{2013})
\end{barticle}
\endbibitem

\bibitem[\protect\citeauthoryear{Falgout et~al.}{2014}]{Falgout2014}
\begin{barticle}
\bauthor{\bsnm{Falgout}, \binits{R.D.}},
\bauthor{\bsnm{Friedhoff}, \binits{S.}},
\bauthor{\bsnm{Kolev}, \binits{T.V.}},
\bauthor{\bsnm{Maclachlan}, \binits{S.P.}},
\bauthor{\bsnm{Schroder}, \binits{J.B.}}:
\batitle{Parallel {T}ime {I}ntegration with {M}ultigrid}.
\bjtitle{SIAM Journal on Scientific Computing}
\bvolume{36}(\bissue{6}),
\bfpage{635}--\blpage{661}
(\byear{2014})
\end{barticle}
\endbibitem

\bibitem[\protect\citeauthoryear{Gander}{2015}]{Gander2015}
\begin{bchapter}
\bauthor{\bsnm{Gander}, \binits{M.J.}}:
\bctitle{50 {Y}ears of {T}ime {P}arallel {T}ime {I}ntegration}.
In: \beditor{\bsnm{Carraro}, \binits{T.}},
\beditor{\bsnm{Geiger}, \binits{M.}},
\beditor{\bsnm{K{\"o}rkel}, \binits{S.}},
\beditor{\bsnm{Rannacher}, \binits{R.}} (eds.)
\bbtitle{Multiple Shooting and Time Domain Decomposition Methods},
pp. \bfpage{69}--\blpage{113}.
\bpublisher{Springer},
\blocation{Cham}
(\byear{2015})
\end{bchapter}
\endbibitem

\bibitem[\protect\citeauthoryear{Gander and Hairer}{2008}]{gander2008nonlinear}
\begin{bchapter}
\bauthor{\bsnm{Gander}, \binits{M.J.}},
\bauthor{\bsnm{Hairer}, \binits{E.}}:
\bctitle{Nonlinear {C}onvergence {A}nalysis for the {P}arareal {A}lgorithm}.
In: \beditor{\bsnm{Langer}, \binits{U.}},
\beditor{\bsnm{Discacciati}, \binits{M.}},
\beditor{\bsnm{Keyes}, \binits{D.E.}},
\beditor{\bsnm{Widlund}, \binits{O.B.}},
\beditor{\bsnm{Zulehner}, \binits{W.}} (eds.)
\bbtitle{Domain Decomposition Methods in Science and Engineering XVII},
pp. \bfpage{45}--\blpage{56}.
\bpublisher{Springer},
\blocation{Berlin, Heidelberg}
(\byear{2008})
\end{bchapter}
\endbibitem

\bibitem[\protect\citeauthoryear{Gander and Vandewalle}{2007}]{Gander2007}
\begin{barticle}
\bauthor{\bsnm{Gander}, \binits{M.J.}},
\bauthor{\bsnm{Vandewalle}, \binits{S.}}:
\batitle{Analysis of the {P}arareal {T}ime‐{P}arallel {T}ime‐{I}ntegration
  {M}ethod}.
\bjtitle{SIAM Journal on Scientific Computing}
\bvolume{29}(\bissue{2}),
\bfpage{556}--\blpage{578}
(\byear{2007})
\end{barticle}
\endbibitem

\bibitem[\protect\citeauthoryear{Lee et~al.}{2022}]{Lee2022}
\begin{botherref}
\oauthor{\bsnm{Lee}, \binits{Y.}},
\oauthor{\bsnm{Park}, \binits{J.}},
\oauthor{\bsnm{Lee}, \binits{C.-O.}}:
Parareal {N}eural {N}etworks {E}mulating a {P}arallel-in-{T}ime {A}lgorithm.
IEEE Transactions on Neural Networks and Learning Systems,
1--12
(2022)
\end{botherref}
\endbibitem

\bibitem[\protect\citeauthoryear{Ibrahim et~al.}{2023}]{Ibrahim2023}
\begin{bchapter}
\bauthor{\bsnm{Ibrahim}, \binits{A.Q.}},
\bauthor{\bsnm{G{\"o}tschel}, \binits{S.}},
\bauthor{\bsnm{Ruprecht}, \binits{D.}}:
\bctitle{Parareal with a {P}hysics-{I}nformed {N}eural {N}etwork as {C}oarse
  {P}ropagator}.
In: \beditor{\bsnm{Cano}, \binits{J.}},
\beditor{\bsnm{Dikaiakos}, \binits{M.D.}},
\beditor{\bsnm{Papadopoulos}, \binits{G.A.}},
\beditor{\bsnm{Peric{\`a}s}, \binits{M.}},
\beditor{\bsnm{Sakellariou}, \binits{R.}} (eds.)
\bbtitle{Euro-Par 2023: Parallel Processing},
pp. \bfpage{649}--\blpage{663}.
\bpublisher{Springer},
\blocation{Cham}
(\byear{2023})
\end{bchapter}
\endbibitem

\bibitem[\protect\citeauthoryear{Jin et~al.}{2023}]{jin2023learning}
\begin{botherref}
\oauthor{\bsnm{Jin}, \binits{B.}},
\oauthor{\bsnm{Lin}, \binits{Q.}},
\oauthor{\bsnm{Zhou}, \binits{Z.}}:
Learning {C}oarse {P}ropagators in {P}arareal {A}lgorithm.
arXiv preprint arXiv:2311.15320
(2023)
\end{botherref}
\endbibitem

\bibitem[\protect\citeauthoryear{Karniadakis et~al.}{2021}]{karniadakis2021a}
\begin{barticle}
\bauthor{\bsnm{Karniadakis}, \binits{G.E.}},
\bauthor{\bsnm{Kevrekidis}, \binits{I.G.}},
\bauthor{\bsnm{Lu}, \binits{L.}},
\bauthor{\bsnm{Perdikaris}, \binits{P.}},
\bauthor{\bsnm{Wang}, \binits{S.}},
\bauthor{\bsnm{Yang}, \binits{L.}}:
\batitle{Physics-informed machine learning}.
\bjtitle{Nature Reviews Physics}
\bvolume{3}(\bissue{6}),
\bfpage{422}--\blpage{440}
(\byear{2021})
\end{barticle}
\endbibitem

\bibitem[\protect\citeauthoryear{Mishra and
  Molinaro}{2022}]{mishra2022estimates}
\begin{barticle}
\bauthor{\bsnm{Mishra}, \binits{S.}},
\bauthor{\bsnm{Molinaro}, \binits{R.}}:
\batitle{Estimates on the generalization error of physics-informed neural
  networks for approximating a class of inverse problems for {PDE}s}.
\bjtitle{IMA Journal of Numerical Analysis}
\bvolume{42}(\bissue{2}),
\bfpage{981}--\blpage{1022}
(\byear{2022})
\end{barticle}
\endbibitem

\bibitem[\protect\citeauthoryear{Doum{\`e}che
  et~al.}{2023}]{doumeche2023convergence}
\begin{botherref}
\oauthor{\bsnm{Doum{\`e}che}, \binits{N.}},
\oauthor{\bsnm{Biau}, \binits{G.}},
\oauthor{\bsnm{Boyer}, \binits{C.}}:
Convergence and error analysis of {PINN}s.
arXiv preprint arXiv:2305.01240
(2023)
\end{botherref}
\endbibitem

\bibitem[\protect\citeauthoryear{De~Ryck and Mishra}{2022}]{de2022error}
\begin{barticle}
\bauthor{\bsnm{De~Ryck}, \binits{T.}},
\bauthor{\bsnm{Mishra}, \binits{S.}}:
\batitle{Error analysis for physics-informed neural networks ({PINN}s)
  approximating {K}olmogorov {PDE}s}.
\bjtitle{Advances in Computational Mathematics}
\bvolume{48}(\bissue{6}),
\bfpage{79}
(\byear{2022})
\end{barticle}
\endbibitem

\bibitem[\protect\citeauthoryear{Opschoor et~al.}{2020}]{opschoor2020deep}
\begin{barticle}
\bauthor{\bsnm{Opschoor}, \binits{J.A.}},
\bauthor{\bsnm{Petersen}, \binits{P.C.}},
\bauthor{\bsnm{Schwab}, \binits{C.}}:
\batitle{Deep {ReLU} networks and high-order finite element methods}.
\bjtitle{Analysis and Applications}
\bvolume{18}(\bissue{05}),
\bfpage{715}--\blpage{770}
(\byear{2020})
\end{barticle}
\endbibitem

\bibitem[\protect\citeauthoryear{Kutyniok
  et~al.}{2022}]{kutyniok2022theoretical}
\begin{barticle}
\bauthor{\bsnm{Kutyniok}, \binits{G.}},
\bauthor{\bsnm{Petersen}, \binits{P.}},
\bauthor{\bsnm{Raslan}, \binits{M.}},
\bauthor{\bsnm{Schneider}, \binits{R.}}:
\batitle{A {T}heoretical {A}nalysis of {D}eep {N}eural {N}etworks and
  {P}arametric {PDE}s}.
\bjtitle{Constructive Approximation}
\bvolume{55}(\bissue{1}),
\bfpage{73}--\blpage{125}
(\byear{2022})
\end{barticle}
\endbibitem

\bibitem[\protect\citeauthoryear{Liu et~al.}{2022}]{Liu2022}
\begin{barticle}
\bauthor{\bsnm{Liu}, \binits{Y.}},
\bauthor{\bsnm{Kutz}, \binits{J.N.}},
\bauthor{\bsnm{Brunton}, \binits{S.L.}}:
\batitle{Hierarchical deep learning of multiscale differential equation
  time-steppers}.
\bjtitle{Philosophical Transactions of the Royal Society A: Mathematical,
  Physical and Engineering Sciences}
\bvolume{380}(\bissue{2229}),
\bfpage{20210200}
(\byear{2022})
\end{barticle}
\endbibitem

\bibitem[\protect\citeauthoryear{Regazzoni et~al.}{2019}]{REGAZZONI2019108852}
\begin{barticle}
\bauthor{\bsnm{Regazzoni}, \binits{F.}},
\bauthor{\bsnm{Dedè}, \binits{L.}},
\bauthor{\bsnm{Quarteroni}, \binits{A.}}:
\batitle{Machine learning for fast and reliable solution of time-dependent
  differential equations}.
\bjtitle{Journal of Computational Physics}
\bvolume{397},
\bfpage{108852}
(\byear{2019})
\end{barticle}
\endbibitem

\bibitem[\protect\citeauthoryear{Lange et~al.}{2021}]{Lange2021}
\begin{botherref}
\oauthor{\bsnm{Lange}, \binits{H.}},
\oauthor{\bsnm{Brunton}, \binits{S.L.}},
\oauthor{\bsnm{Kutz}, \binits{J.N.}}:
From {F}ourier to {K}oopman: {S}pectral {M}ethods for {L}ong-term {T}ime
  {S}eries {P}rediction.
J. Mach. Learn. Res.
\textbf{22}(1)
(2021)
\end{botherref}
\endbibitem

\bibitem[\protect\citeauthoryear{Huang et~al.}{2006}]{huang2006extreme}
\begin{barticle}
\bauthor{\bsnm{Huang}, \binits{G.-B.}},
\bauthor{\bsnm{Zhu}, \binits{Q.-Y.}},
\bauthor{\bsnm{Siew}, \binits{C.-K.}}:
\batitle{Extreme learning machine: {T}heory and applications}.
\bjtitle{Neurocomputing}
\bvolume{70}(\bissue{1-3}),
\bfpage{489}--\blpage{501}
(\byear{2006})
\end{barticle}
\endbibitem

\bibitem[\protect\citeauthoryear{Huang et~al.}{2015}]{huang2015trends}
\begin{barticle}
\bauthor{\bsnm{Huang}, \binits{G.}},
\bauthor{\bsnm{Huang}, \binits{G.-B.}},
\bauthor{\bsnm{Song}, \binits{S.}},
\bauthor{\bsnm{You}, \binits{K.}}:
\batitle{Trends in extreme learning machines: {A} review}.
\bjtitle{Neural Networks}
\bvolume{61},
\bfpage{32}--\blpage{48}
(\byear{2015})
\end{barticle}
\endbibitem

\bibitem[\protect\citeauthoryear{Rahimi and Recht}{2008a}]{rahimi2008weighted}
\begin{botherref}
\oauthor{\bsnm{Rahimi}, \binits{A.}},
\oauthor{\bsnm{Recht}, \binits{B.}}:
Weighted {S}ums of {R}andom {K}itchen {S}inks: Replacing minimization with
  randomization in learning.
Advances in neural information processing systems
\textbf{21}
(2008)
\end{botherref}
\endbibitem

\bibitem[\protect\citeauthoryear{Rahimi and Recht}{2008b}]{Rahimi2008}
\begin{bchapter}
\bauthor{\bsnm{Rahimi}, \binits{A.}},
\bauthor{\bsnm{Recht}, \binits{B.}}:
\bctitle{Uniform {A}pproximation of {F}unctions with {R}andom {B}ases}.
In: \bbtitle{2008 46th Annual Allerton Conference on Communication, Control,
  and Computing},
pp. \bfpage{555}--\blpage{561}
(\byear{2008})
\end{bchapter}
\endbibitem

\bibitem[\protect\citeauthoryear{Gonon et~al.}{2023}]{gonon2023approximation}
\begin{barticle}
\bauthor{\bsnm{Gonon}, \binits{L.}},
\bauthor{\bsnm{Grigoryeva}, \binits{L.}},
\bauthor{\bsnm{Ortega}, \binits{J.-P.}}:
\batitle{Approximation bounds for random neural networks and reservoir
  systems}.
\bjtitle{The Annals of Applied Probability}
\bvolume{33}(\bissue{1}),
\bfpage{28}--\blpage{69}
(\byear{2023})
\end{barticle}
\endbibitem

\bibitem[\protect\citeauthoryear{Fabiani
  et~al.}{2023}]{fabiani2023parsimonious}
\begin{barticle}
\bauthor{\bsnm{Fabiani}, \binits{G.}},
\bauthor{\bsnm{Galaris}, \binits{E.}},
\bauthor{\bsnm{Russo}, \binits{L.}},
\bauthor{\bsnm{Siettos}, \binits{C.}}:
\batitle{Parsimonious physics-informed random projection neural networks for
  initial value problems of {ODE}s and index-1 {DAE}s}.
\bjtitle{Chaos: An Interdisciplinary Journal of Nonlinear Science}
\bvolume{33}(\bissue{4}),
\bfpage{043128}
(\byear{2023})
\end{barticle}
\endbibitem

\bibitem[\protect\citeauthoryear{Mortari et~al.}{2019}]{mortari2019high}
\begin{barticle}
\bauthor{\bsnm{Mortari}, \binits{D.}},
\bauthor{\bsnm{Johnston}, \binits{H.}},
\bauthor{\bsnm{Smith}, \binits{L.}}:
\batitle{High accuracy least-squares solutions of nonlinear differential
  equations}.
\bjtitle{Journal of computational and applied mathematics}
\bvolume{352},
\bfpage{293}--\blpage{307}
(\byear{2019})
\end{barticle}
\endbibitem

\bibitem[\protect\citeauthoryear{Schiassi et~al.}{2021}]{schiassi2021physics}
\begin{barticle}
\bauthor{\bsnm{Schiassi}, \binits{E.}},
\bauthor{\bsnm{De~Florio}, \binits{M.}},
\bauthor{\bsnm{D’Ambrosio}, \binits{A.}},
\bauthor{\bsnm{Mortari}, \binits{D.}},
\bauthor{\bsnm{Furfaro}, \binits{R.}}:
\batitle{Physics-{I}nformed {N}eural {N}etworks and {F}unctional
  {I}nterpolation for {D}ata-{D}riven {P}arameters {D}iscovery of
  {E}pidemiological {C}ompartmental {M}odels}.
\bjtitle{Mathematics}
\bvolume{9}(\bissue{17}),
\bfpage{2069}
(\byear{2021})
\end{barticle}
\endbibitem

\bibitem[\protect\citeauthoryear{Dwivedi and
  Srinivasan}{2020}]{dwivedi2020physics}
\begin{barticle}
\bauthor{\bsnm{Dwivedi}, \binits{V.}},
\bauthor{\bsnm{Srinivasan}, \binits{B.}}:
\batitle{{P}hysics {I}nformed {E}xtreme {L}earning {M}achine ({PIELM})--{A}
  rapid method for the numerical solution of partial differential equations}.
\bjtitle{Neurocomputing}
\bvolume{391},
\bfpage{96}--\blpage{118}
(\byear{2020})
\end{barticle}
\endbibitem

\bibitem[\protect\citeauthoryear{De~Florio et~al.}{2022}]{de2022physics}
\begin{botherref}
\oauthor{\bsnm{De~Florio}, \binits{M.}},
\oauthor{\bsnm{Schiassi}, \binits{E.}},
\oauthor{\bsnm{Furfaro}, \binits{R.}}:
Physics-informed neural networks and functional interpolation for stiff
  chemical kinetics.
Chaos: An Interdisciplinary Journal of Nonlinear Science
\textbf{32}(6)
(2022)
\end{botherref}
\endbibitem

\bibitem[\protect\citeauthoryear{Quarteroni et~al.}{}]{quarteroni2006numerical}
\begin{botherref}
\oauthor{\bsnm{Quarteroni}, \binits{A.}},
\oauthor{\bsnm{Sacco}, \binits{R.}},
\oauthor{\bsnm{Saleri}, \binits{F.}}:
Numerical {M}athematics
vol. 37.
Springer,
(2006)
\end{botherref}
\endbibitem

\bibitem[\protect\citeauthoryear{Nocedal and Wright}{}]{nocedal1999numerical}
\begin{botherref}
\oauthor{\bsnm{Nocedal}, \binits{J.}},
\oauthor{\bsnm{Wright}, \binits{S.J.}}:
Numerical {O}ptimization.
Springer,
(1999)
\end{botherref}
\endbibitem

\bibitem[\protect\citeauthoryear{Branch et~al.}{1999}]{branch1999subspace}
\begin{barticle}
\bauthor{\bsnm{Branch}, \binits{M.A.}},
\bauthor{\bsnm{Coleman}, \binits{T.F.}},
\bauthor{\bsnm{Li}, \binits{Y.}}:
\batitle{A {S}ubspace, {I}nterior, and {C}onjugate {G}radient {M}ethod for
  {L}arge-{S}cale {B}ound-{C}onstrained {M}inimization {P}roblems}.
\bjtitle{SIAM Journal on Scientific Computing}
\bvolume{21}(\bissue{1}),
\bfpage{1}--\blpage{23}
(\byear{1999})
\end{barticle}
\endbibitem

\bibitem[\protect\citeauthoryear{Bullo}{}]{FB-CTDS}
\begin{botherref}
\oauthor{\bsnm{Bullo}, \binits{F.}}:
Contraction {T}heory for {D}ynamical {S}ystems,
{1.1} edn.
Kindle Direct Publishing,
(2023)
\end{botherref}
\endbibitem

\bibitem[\protect\citeauthoryear{Flamant et~al.}{2020}]{flamant2020solving}
\begin{botherref}
\oauthor{\bsnm{Flamant}, \binits{C.}},
\oauthor{\bsnm{Protopapas}, \binits{P.}},
\oauthor{\bsnm{Sondak}, \binits{D.}}:
Solving {D}ifferential {E}quations {U}sing {N}eural {N}etwork {S}olution
  {B}undles.
arXiv preprint arXiv:2006.14372
(2020)
\end{botherref}
\endbibitem

\bibitem[\protect\citeauthoryear{Wang and Perdikaris}{2023}]{wang2023long}
\begin{barticle}
\bauthor{\bsnm{Wang}, \binits{S.}},
\bauthor{\bsnm{Perdikaris}, \binits{P.}}:
\batitle{Long-time integration of parametric evolution equations with
  physics-informed deeponets}.
\bjtitle{Journal of Computational Physics}
\bvolume{475},
\bfpage{111855}
(\byear{2023})
\end{barticle}
\endbibitem

\bibitem[\protect\citeauthoryear{S{\"o}derlind}{1984}]{soderlind1984nonlinear}
\begin{barticle}
\bauthor{\bsnm{S{\"o}derlind}, \binits{G.}}:
\batitle{On nonlinear difference and differential equations}.
\bjtitle{BIT Numerical Mathematics}
\bvolume{24},
\bfpage{667}--\blpage{680}
(\byear{1984})
\end{barticle}
\endbibitem

\bibitem[\protect\citeauthoryear{Hairer et~al.}{}]{hairer1993solvingAPP}
\begin{botherref}
\oauthor{\bsnm{Hairer}, \binits{E.}},
\oauthor{\bsnm{N{\o}rsett}, \binits{S.P.}},
\oauthor{\bsnm{Wanner}, \binits{G.}}:
Solving {O}rdinary {D}ifferential {E}quations {I}, {N}onstiff {P}roblems.
Springer,
(1993)
\end{botherref}
\endbibitem

\bibitem[\protect\citeauthoryear{Desoer and Vidyasagar}{}]{desoer2009feedback}
\begin{botherref}
\oauthor{\bsnm{Desoer}, \binits{C.A.}},
\oauthor{\bsnm{Vidyasagar}, \binits{M.}}:
Feedback {S}ystems: {I}nput–{O}utput {P}roperties.
SIAM,
(2009)
\end{botherref}
\endbibitem

\bibitem[\protect\citeauthoryear{Magnus and Neudecker}{}]{magnus2019matrix}
\begin{botherref}
\oauthor{\bsnm{Magnus}, \binits{J.R.}},
\oauthor{\bsnm{Neudecker}, \binits{H.}}:
Matrix {D}ifferential {C}alculus with {A}pplications in {S}tatistics and
  {E}conometrics.
John Wiley \& Sons,
(2019)
\end{botherref}
\endbibitem

\bibitem[\protect\citeauthoryear{Ault and Holmgreen}{2003}]{ault2003dynamics}
\begin{botherref}
\oauthor{\bsnm{Ault}, \binits{S.}},
\oauthor{\bsnm{Holmgreen}, \binits{E.}}:
Dynamics of the {B}russelator.
Math 715 Projects (Autumn 2002)
\textbf{2}
(2003)
\end{botherref}
\endbibitem

\end{thebibliography}

\begin{appendices}

\section{A-posteriori error estimate based on the defect}\label{se:convergenceDefect}
We now derive an alternative a-posteriori estimate for network-based approximate solutions based on defect control. 
\begin{lemma}\label{lem:linearODE}
Consider the initial value problem \eqref{eq:ode}, given by
\[
\begin{cases}
\xp\left(t\right) = \F\left(\x\left(t\right)\right),\\
\x\left(0\right) = \x_0,
\end{cases}
\]
where $\F:\mathbb{R}^d\to\mathbb{R}^d$ is continuously differentiable and admits a unique solution. Let $\y:\mathbb{R}\to\mathbb{R}^d$ satisfy
\begin{equation*}
\begin{cases}
\y'\left(t\right) = \F\left(\y\left(t\right)\right) + \bfd\left(t\right),\,\quad\bfd:\mathbb{R}\to\mathbb{R}^d,\\
\y\left(0\right) = \x_0.
\end{cases}
\end{equation*}
Then $\z(t):=\y(t)-\x(t)$ satisfies the linear differential equation
\begin{equation}\label{eq:linearODE}
\begin{cases}
\z'\left(t\right)=A\left(t\right)\z\left(t\right)+\bfd\left(t\right),\\
\z\left(0\right) = 0,
\end{cases}
\end{equation}
where
\[
A(t) = \int_0^1 \DF(\x(t)+s\z(t))\dd{s} 
\]
and $\DF$ is the Jacobian matrix of $\F$.
\end{lemma}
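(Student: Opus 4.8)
The plan is to differentiate $\z$ directly and rewrite the vector-field difference via a fundamental-theorem-of-calculus identity along the segment joining $\x(t)$ and $\y(t)$. First I would note that $\z(0)=\y(0)-\x(0)=\x_0-\x_0=0$, which disposes of the initial condition immediately. For the differential equation, I would compute
\[
\z'(t)=\yp(t)-\xp(t)=\bigl[\F(\y(t))+\bfd(t)\bigr]-\F(\x(t))=\bigl[\F(\y(t))-\F(\x(t))\bigr]+\bfd(t),
\]
using that $\x$ and $\y$ solve their respective initial value problems. The whole content of the lemma then lies in re-expressing the bracketed difference $\F(\y(t))-\F(\x(t))$ as $A(t)\z(t)$.

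The key step is the integral form of the mean value theorem for $\F$. Since $\y(t)=\x(t)+\z(t)$, I would consider the auxiliary scalar-parameter map $s\mapsto \F\bigl(\x(t)+s\z(t)\bigr)$ on $[0,1]$ for fixed $t$, and apply the fundamental theorem of calculus together with the chain rule:
\[
\F(\y(t))-\F(\x(t))=\int_0^1\frac{\dd{}}{\dd{s}}\F\bigl(\x(t)+s\z(t)\bigr)\dd{s}=\int_0^1 \DF\bigl(\x(t)+s\z(t)\bigr)\z(t)\,\dd{s}.
\]
Pulling the $t$-dependent (in $s$ constant) factor $\z(t)$ out of the integral over $s$ gives $\left(\int_0^1 \DF(\x(t)+s\z(t))\,\dd{s}\right)\z(t)=A(t)\z(t)$, which is exactly the claimed linear form once recombined with $\bfd(t)$.

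The only genuine point to check is that this manipulation is legitimate, i.e.\ that the regularity hypotheses justify the chain rule and the extraction of $\z(t)$ from the integral. This is where the assumption $\F\in\mathcal{C}^1$ does the work: it guarantees $\DF$ is continuous, so the integrand $s\mapsto \DF(\x(t)+s\z(t))\z(t)$ is continuous on $[0,1]$ and the fundamental theorem of calculus applies, while continuity of $\DF$ along the (bounded) segment also ensures $A(t)$ is a well-defined finite matrix depending continuously on $t$. I do not expect any serious obstacle here; the main thing to state carefully is that $A(t)$ is the \emph{matrix-valued} integral $\int_0^1 \DF(\x(t)+s\z(t))\dd{s}$ and that matrix multiplication commutes with the $s$-integration because $\z(t)$ does not depend on $s$.
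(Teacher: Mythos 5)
Your proof is correct and follows essentially the same route as the paper's: the paper's one-line proof is precisely the identity $\F(\y(t))-\F(\x(t)) = \int_0^1\frac{d}{ds}\F\left(\x(t)+s\z(t)\right)\dd{s} = A(t)\z(t)$, which you derive via the fundamental theorem of calculus and the chain rule. Your version merely spells out the subtraction of the two ODEs, the initial condition, and the regularity justification that the paper leaves implicit.
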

\begin{proof}
To prove the lemma, it suffices to highlight that
\[
\F\left(\y\left(t\right)\right)-\F\left(\x\left(t\right)\right) = \int_0^1\frac{d}{ds}\F\left(\x\left(t\right)+s\z\left(t\right)\right)\dd{s} = A\left(t\right)\z\left(t\right).
\]
\end{proof}
The solution to the linear problem \eqref{eq:linearODE} satisfies the following bound:
\begin{lemma}[Theorem 1 in \cite{soderlind1984nonlinear}]\label{lem:boundLogarithmic}
Let $\z(t)$ solve the initial value problem in \eqref{eq:linearODE}. Suppose that $\|\bfd(t)\|_2\leq \varepsilon$ for $t\geq 0$. Then,
\begin{align*}
\|\z(t)\|_2 &\leq \varepsilon \int_{0}^{t}\exp\left(\int_{s}^t \mu_2\left(A\left(\tau\right)\right)\dd{\tau}\right)\dd{s},
\end{align*}
where
\[
\mu_2(A) = \lambda_{\max}\left(\frac{A+A^{\top}}{2}\right)
\]
is the logarithmic $2-$norm of $A$.
\end{lemma}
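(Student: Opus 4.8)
The plan is to reduce the vector estimate to a scalar differential inequality for $u(t):=\|\z(t)\|_2$ and then close it with an integrating-factor (Gr\"onwall) argument. The engine is the standard characterization of the logarithmic $2-$norm as a Rayleigh quotient: since $\mu_2(A)=\lambda_{\max}\!\left(\frac{A+A^{\top}}{2}\right)$, one has $\z^{\top}\frac{A+A^{\top}}{2}\z\leq \mu_2(A)\|\z\|_2^2$ for every $\z$, simply because the symmetric part is bounded above by its largest eigenvalue in the quadratic form.

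First I would differentiate $u^2=\langle\z,\z\rangle$ and substitute the dynamics $\z'=A\z+\bfd$ from \eqref{eq:linearODE} to get $\frac12\frac{\dd{}}{\dd{t}}\|\z\|_2^2=\langle A\z,\z\rangle+\langle\bfd,\z\rangle$. Using $\langle A\z,\z\rangle=\z^{\top}\frac{A+A^{\top}}{2}\z\leq \mu_2(A(t))\|\z\|_2^2$ together with Cauchy--Schwarz and $\|\bfd(t)\|_2\leq\varepsilon$, I obtain, at every $t$ where $u(t)>0$,
\[
\frac{\dd{}}{\dd{t}}u(t)=\frac{\langle A\z,\z\rangle+\langle\bfd,\z\rangle}{\|\z\|_2}\leq \mu_2(A(t))\,u(t)+\varepsilon.
\]
Hence $u$ obeys the linear scalar inequality $u'\leq \mu_2(A(t))u+\varepsilon$ with $u(0)=0$. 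Multiplying by the integrating factor $\exp\!\left(-\int_0^t\mu_2(A(\tau))\dd{\tau}\right)$, integrating from $0$ to $t$, and using $u(0)=0$ gives exactly the claimed estimate
\[
u(t)\leq \varepsilon\int_0^t\exp\!\left(\int_s^t\mu_2(A(\tau))\dd{\tau}\right)\dd{s}.
\]

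The one point requiring care — and the main, if minor, obstacle — is that $t\mapsto\|\z(t)\|_2$ need not be differentiable at instants where $\z(t)=0$, so the chain-rule step is only valid on the open set $\{u>0\}$. I would resolve this by working with the upper Dini derivative $D^+u$ and checking the inequality persists at zeros of $\z$: if $\z(t)=0$ then $A(t)\z(t)=0$, so $\z(t+h)=h\,\bfd(t)+o(h)$ and therefore $D^+u(t)=\|\bfd(t)\|_2\leq\varepsilon=\mu_2(A(t))u(t)+\varepsilon$, consistent with the bound above. Alternatively, one can regularize via $u_\delta:=\sqrt{\|\z\|_2^2+\delta^2}$, which is smooth, derive the same inequality with an $O(\delta)$ slack, and let $\delta\to0$. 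Once the differential inequality is established for $D^+u$ on all of $[0,\infty)$, the comparison version of Gr\"onwall's lemma for Dini derivatives applies verbatim; continuity of $A(\cdot)$ (inherited from $\F\in\mathcal{C}^1$) and of $\bfd$ makes $\mu_2(A(\cdot))$ continuous, so every integral appearing above is well defined.
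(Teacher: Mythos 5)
Your proof is correct. The paper does not prove this lemma itself---it states it as Theorem 1 of \cite{soderlind1984nonlinear} and refers the reader to \cite[Theorem 10.6]{hairer1993solvingAPP}---and your argument (the Rayleigh-quotient bound $\langle A\z,\z\rangle\leq\mu_2(A)\|\z\|_2^2$ giving the scalar differential inequality $D^+\|\z\|_2\leq\mu_2(A(t))\|\z\|_2+\varepsilon$, with Dini derivatives or the $\sqrt{\|\z\|_2^2+\delta^2}$ regularization handling zeros of $\z$, closed by an integrating-factor Gr\"onwall step) is essentially the standard proof found in those references.
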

For the proof of this lemma, see \cite[Theorem 10.6]{hairer1993solvingAPP}. 

As we are interested in solving \eqref{eq:ode}, we set $\y(t):=\ELM{\theta,t,0,{\x_0}}$ and introduce the defect function
\[
\bfd\left(t\right) := \dELM{\theta,t,0,{\x_0}}-\F\left(\ELM{\theta,t,0,{\x_0}}\right).
\]
We remark that the definition of $\bfd$ is of the same form as the loss \eqref{eq:loss}. If it was known that $\|\bfd(t)\|_2\leq \varepsilon$ for a tolerance $\varepsilon>0$ and all $t\in [0,\Delta t]$, then by Lemma \ref{lem:boundLogarithmic} we could conclude that
\begin{equation*}
\left\|\x\left(t\right)-\ELM{\theta,t,0,{\x_0}}\right\|_2 \leq \varepsilon\,\int_{0}^{t}\exp\left(\int_{s}^t \mu_2\left(A\left(\tau\right)\right)\dd{\tau}\right)\dd{s},\quad t\in \left[0,\Delta t\right].
\end{equation*}
Given that the solution $\x(t)$ is unknown, $A(\tau)$ and its logarithmic norm cannot be computed exactly. Thus, for a more practical error estimate, we introduce an assumption on the existence of a compact subset $\Omega\subset\mathbb{R}^d$ such that $\x(t)+s\z(t)\in \Omega$ for $(s,t)\in [0,1]\times [0,\Delta t]$. Then, we can proceed with the inequality chain as
\begin{equation}\label{eq:boundLogNorm}
\left\|\x\left(t\right)-\ELM{\theta,t,0,{\x_0}}\right\|_2 \leq \varepsilon\,\int_{0}^{t} e^{M(t-s)}\dd{s} = \varepsilon \frac{e^{Mt}-1}{M}, 
\end{equation}
where $M:=\max_{\z\in\Omega}\mu_2(\DF(\z))\in \mathbb R$. Note that the right-hand side of \eqref{eq:boundLogNorm} is nonnegative for all $t\geq 0$. In particular, \eqref{eq:boundLogNorm} implies that a neural network $\mathcal{N}_{\theta}$ can be employed to approximate the solution of \eqref{eq:ode} which is as accurate as a classical coarse solver $\coarse$ provided the norm of the defect $\|\bfd(t)\|_2$ is sufficiently small. 

\section{Bound on the norm of the sensitivity matrix}\label{app:boundNormJac}
In this appendix, we provide a practical bound for the norm of the Jacobian of the flow map of a vector field $\F$, assumed to be continuously differentiable with respect to the initial condition. For this, we differentiate the initial value problem \eqref{eq:ode}, given by
\begin{equation}\label{eq:odeFlow}
\begin{cases}
\frac{d}{dt}\phi_{\F}^{s,t}\left(\x_0\right) = \F\left(\phi_{\F}^{s,t}\left(\x_0\right)\right)\in\mathbb{R}^d,\\
\phi_{\F}^{s,s}\left(\x_0\right) = \x_0,
\end{cases}
\end{equation}
with respect to $\x_0$ and obtain
\begin{equation}\label{eq:variationalEquation}
\begin{cases}
\frac{d}{dt}\left(\frac{\partial \phi_{\F}^{s,t}\left(\x_0\right)}{\partial \x_0}\right) = \DF\left(\phi_{\F}^{s,t}\left(\x_0\right)\right)\frac{\partial \phi_{\F}^{s,t}\left(\x_0\right)}{\partial \x_0}\in\mathbb{R}^{d\times d},\\
\frac{\partial \phi_{\F}^{s,s}\left(\x_0\right)}{\partial \x_0} = I_{d},
\end{cases}
\end{equation}
where $I_{d}\in\mathbb{R}^{d\times d}$ is the identity matrix. Equation \eqref{eq:variationalEquation} is generally known as the variational equation of \eqref{eq:odeFlow}. This ODE is a non-autonomous linear differential equation in the unknown matrix $\partial_{\x_0}\phi_{\F}^{s,t}(\x_0)$. In practice, \eqref{eq:variationalEquation} should be solved jointly with \eqref{eq:odeFlow}. However, for the purpose of bounding the Euclidean norm $\|\partial_{\x_0}\phi_{\F}^{s,t}(\x_0)\|_2$, it is not necessary to solve them. Following \cite[Chapter 2]{desoer2009feedback}, we assume that $\phi^{s,t}_{\F}(\x_0)\in \Omega$ for  $\Omega\subset\mathbb{R}^d$ compact and all $0\leq s\leq t\leq \Delta t$. This is not a restrictive assumption on compact time intervals given the assumed regularity for $\F$. Then, one can get
\begin{align*}
\left\|\partial_{\x_0}\phi_{\F}^{s,t}\left(\x_0\right)\right\|_2&\leq \left\|\partial_{\x_0}\phi_{\F}^{s,s}\left(\x_0\right)\right\|_2\exp{\left(\int_s^t\mu_2\left(\DF\left(\phi_{\F}^{s,s'}\left(\x_0\right)\right)\right)\dd{s'}\right)} \\
&=\exp\left(\int_s^t\mu_2\left(\DF\left(\phi_{\F}^{s,s'}\left(\x_0\right)\right)\right)\dd{s'}\right)\leq \exp\left(M\Delta t\right),
\end{align*}
where $M=\max_{\z\in\Omega}\mu_2(D\F(\z))$. We conclude that the constant $\delta$ in the proof of Theorem \ref{thm:QuadAposteriori} can be set to $\exp(M\Delta t)$, with $M$ positive or negative depending on $\F$.

\section{The Jacobian matrix of the loss function}\label{app:jacobian}

In this subsection, we consider the loss function \eqref{eq:matrixEqn} and its gradient. Note that \eqref{eq:matrixEqn} can be expressed as \eqref{eq:loss} which in turn can be related to the solution of the non-linear matrix equation
\[
\dtX_{\theta}\left(\x,\Delta t\right) = \bfF\left(\tX_{\theta}\left(\x,\Delta t\right)\right).
\]
More explicitly, we have
\begin{align*}
\tX_{\theta}\left(\x,\Delta t\right)&=\bb{1}_C\x^{\top} + \left(\bfH-\overline{\bfH}\right)\theta,\,\,\bb{1}_C=\begin{bmatrix} 1 & \dots & 1\end{bmatrix}^{\top}\in\mathbb{R}^C,\\
\dtX_{\theta}\left(\x,\Delta t\right)&=\bfH'\theta.
\end{align*}
To minimize the loss function \eqref{eq:loss}, we need the Jacobian of the matrix-valued function $\mathbf{G}_{\theta}(\x,\Delta t) = \dtX_{\theta}(\x,\Delta t)-\bfF(\tX_{\theta}(\x,\Delta t))$. As $\mathbf{G}_{\theta}$ is a matrix-valued function with matrix inputs, we rely on the vectorization operator, denoted by $\mathrm{vec}$, using the machinery of matrix-calculus introduced, for example, in \cite{magnus2019matrix}. We hence compute $\frac{\partial \veco{\mathbf{G}_{\theta}\left(\x,\Delta t\right)}}{\partial\veco{\theta}} \in \mathbb{R}^{Cd\times Hd}$, given by
\begin{align*}
\frac{\partial \veco{\mathbf{G}_{\theta}\left(\x,\Delta t\right)}}{\partial\veco{\theta}} &= I_d\otimes \bfH' - \frac{\partial \veco{\bfF\left(\tX_{\theta}\left(\x,\Delta t\right)\right)}}{\partial\veco{\theta}} \\
&=I_d\otimes \bfH' - \frac{\partial \veco{\bfF\left(\bfX\right)}}{\partial \veco
{\bfX}}\Big\vert_{\bfX=\tX_{\theta}\left(\x,\Delta t\right)}\frac{\partial \veco{\tX_{\theta}\left(\x,\Delta t\right)}}{\partial \veco
{\theta}}\\
&= I_d\otimes \bfH' - \frac{\partial \veco{\bfF\left(\bfX\right)}}{\partial \veco
{\bfX}}\Big\vert_{\bfX=\tX_{\theta}\left(\x,\Delta t\right)}\left(I_d\otimes \left(\bfH-\overline{\bfH}\right)\right),
\end{align*}
where $I_d\in\mathbb{R}^{d\times d}$ is the identity matrix, $\otimes$ is the Kronecker product, and $\mathrm{vec}$ stacks the columns of the input matrix into a column vector. The Jacobian of $\bfF$ in the last line depends on the vector field $\F$, while the other terms do not. 

Most of the dynamical systems we consider in the numerical experiments in Section \ref{se:experiments} are of low dimension. For this reason, for all the cases but Burgers' equation, we assemble the Jacobian case by case, following this construction. For Burgers' equation, we instead implement it as a linear operator, specifying its action and the action of its transpose onto input vectors. For the Burgers' equation, we have
\[
\F\left(\mathbf{u}\right) = -\mathbf{u}\odot \left(\D_1\mathbf{u}\right) + \nu \D_2\mathbf{u}\in\R^d,
\]
and hence $\bfF(\bfX) = -\bfX\odot (\bfX \D_1^{\top}) + \nu\bfX \D_2^{\top}\in\R^{C\times d}$.
This expression implies that
\[
\frac{\partial \veco{\bfF\left(\bfX\right)}}{\partial \veco
{\bfX}} = -\mathrm{diag}\left(\veco{\bfX \D_1^{\top}}\right) - \mathrm{diag}\left(\veco{\bfX}\right)\left(\D_1\otimes I_C\right)+\nu \D_2\otimes I_C.
\]
\section{Details on the network for the flow map approach}\label{se:FlowMapApproach}
In this section, we provide details on the network for the flow map approach required for the comparison of the training costs presented in Table \ref{tab:timeSIR}. The network used for the coarse propagator is based on the parametrization
\begin{align*}
\z:=\begin{bmatrix}\x_0^{\top},t\end{bmatrix}^{\top}&\mapsto \tanh\left(\mathbf{A}_0\z+\mathbf{a}_0\right)=:\mathbf{h}_1\in\mathbb{R}^{10},\\
\mathbf{h}_{\ell}&\mapsto  \tanh\left(\mathbf{A}_{\ell}\mathbf{h}_{\ell} + \mathbf{a}_{\ell}\right)=:\mathbf{h}_{\ell+1}\in\mathbb{R}^{10},\,\,\ell=1,\cdots,4,\\
\mathbf{h}_{5}&\mapsto \x_0 + \left(1-e^{-t}\right)\mathbf{P}\mathbf{h}_5=:\ELM{\theta,t,0,{\x_0}}\in\mathbb{R}^3,
\end{align*}
where $\theta = \{\mathbf{A}_{\ell},\mathbf{a}_{\ell},\mathbf{P}\}_{\ell=0}^4$. To train the network, implemented with PyTorch, we use the Adam optimizer for $10^5$ epochs, with each epoch consisting of minimizing the ODE residual over $500$ different randomly sampled collocation points $(t^i,\x_0^i)\in [0,1]\times [0,1]^3$. 
\section{Experiment for Brusselator's equation}

This section collects numerical experiments for the Brusselator, which is a system of two scalar differential equations modeling a chain of chemical reactions \cite{ault2003dynamics}. The equations write
\begin{equation}
\begin{cases}
x_1'\left(t\right) = A + x_1^2\left(t\right)x_2\left(t\right) - \left(B+1\right)x_1\left(t\right),\\
x_2'\left(t\right) = Bx_1\left(t\right)-x_1^2\left(t\right)x_2\left(t\right),\\
\x\left(0\right) = \begin{bmatrix} 0 & 1 \end{bmatrix}^{\top},
\end{cases}
\end{equation}
where we choose the parameters $A = 1$, $B=3$. In this setting, one can prove to have a limit cycle in the dynamics. We simulate this system on the time interval $[0,T=12]$, with a fine timestep $\delta t = T/640$ and a coarse one of size $\Delta T = T/32$.
\begin{figure}[ht!]
\centering
\includegraphics[width=.7\textwidth]{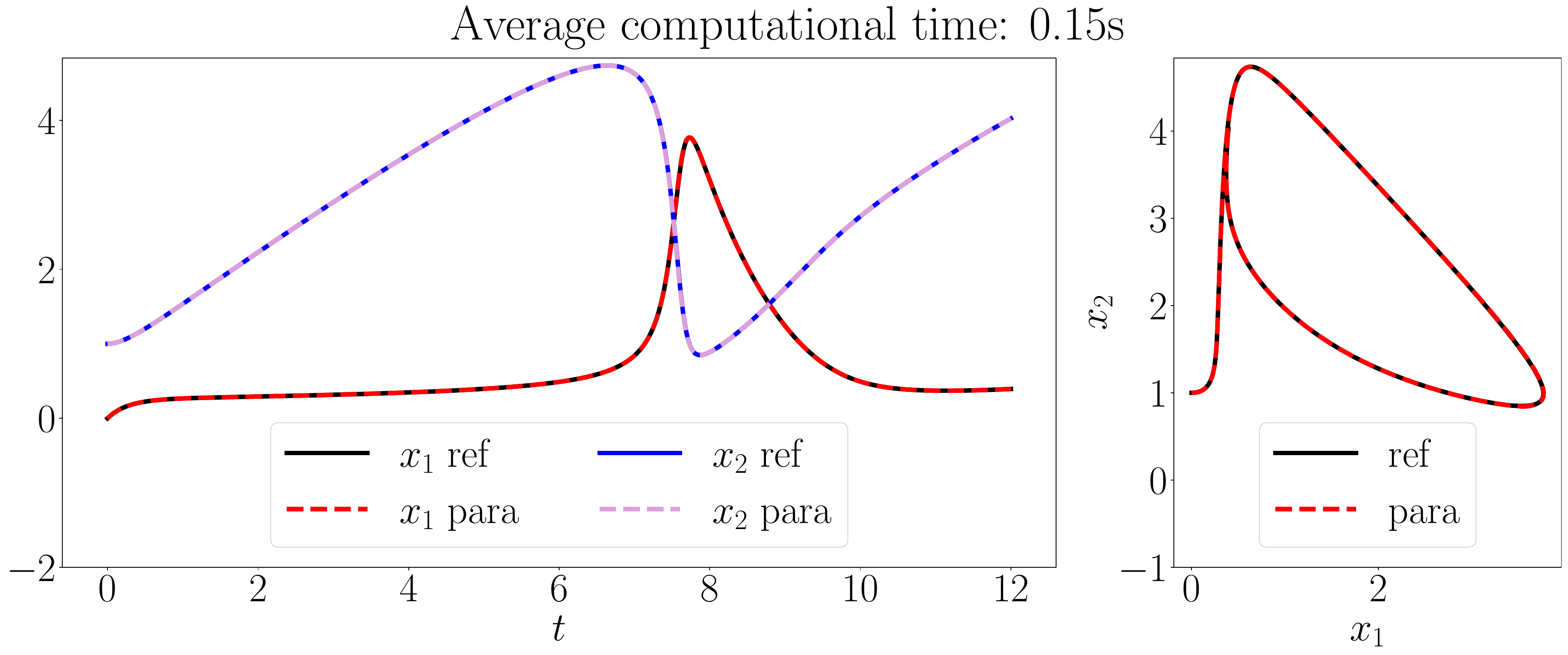}
\caption{Qualitative accuracy of the predicted solutions using Parareal with RPNN.}
\label{fig:paraBrusselator}
\vspace{-\baselineskip}
\end{figure}
We repeat the simulation 100 times, reporting the average cost of one coarse timestep in Table \ref{tab:timeBrusselator}, together with the average total cost of the hybrid Parareal solver. Figure \ref{fig:paraBrusselator} shows the approximate solution and a reference solution. We also remark that, as desired, the hybrid method recovers the limit cycle. 
\begin{table}[ht!]
    \centering
    \begin{tabular}{|l||c|}
    \hline
         Timing breakdown & RPNN\\
         \hline\hline
         Average cost coarse step in the zeroth iterate &  0.001012s\\
         \hline
         Average cost to produce the solution & 0.1469s \\
         \hline
    \end{tabular}
    \caption{Brusselator: Computational time for Hybrid Parareal using a single core.}
    \label{tab:timeBrusselator}
    \vspace{-\baselineskip}
\end{table}

\section{Additional experiments for Burgers' equation}\label{app:burgers}
\begin{figure}[ht!]
\centering
\includegraphics[width=.7\textwidth]{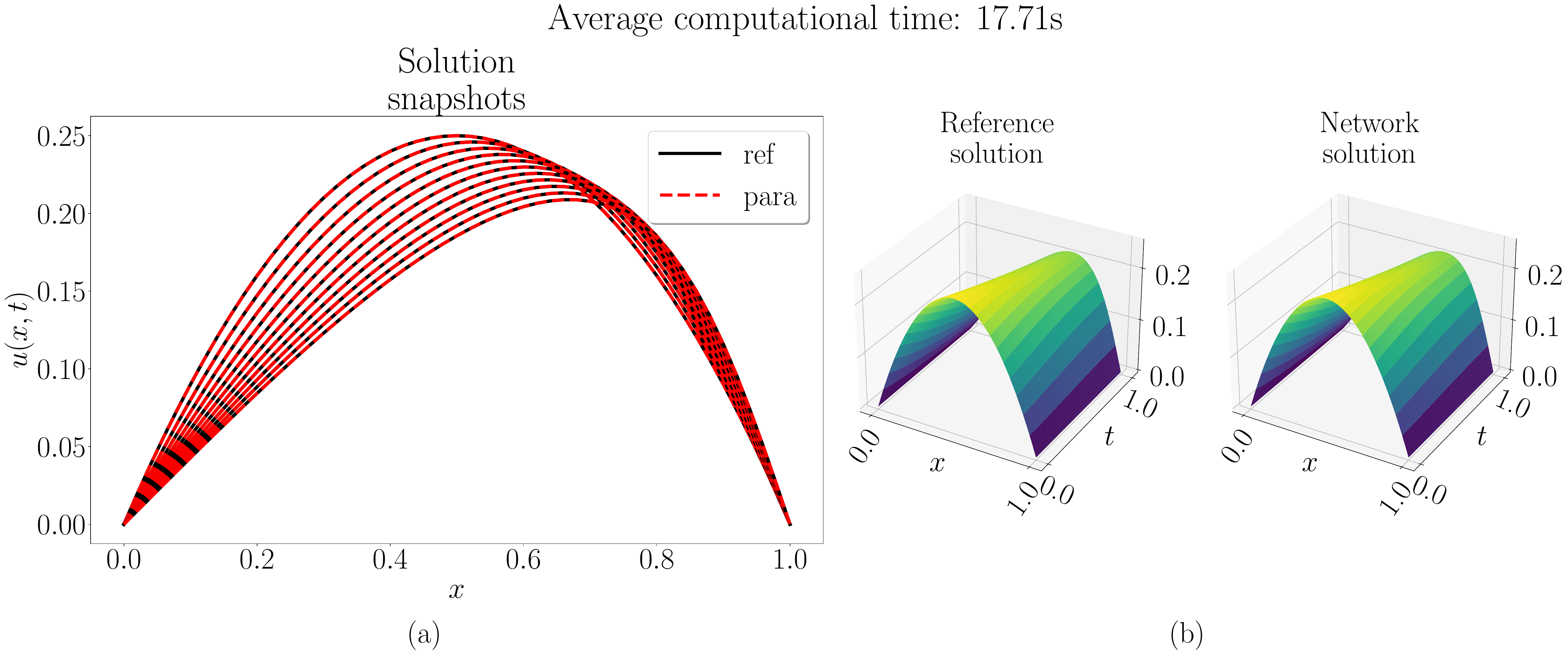}
\caption{Burgers: Snapshots of the solution obtained with Hybrid Parareal (left), comparison of the solution surfaces between Hybrid Parareal and the fine integrator applied serially (right). Solution corresponding to $u_0(x)=x(1-x)$.}
\label{fig:paraBurgersQuadratic}
\vspace{-\baselineskip}
\end{figure}

\begin{table}[ht!]
    \centering
    \begin{tabular}{|l||c|}
    \hline
         Timing breakdown & RPNN\\
         \hline\hline
         Average cost coarse step in the zeroth iterate &  0.1695s\\
         \hline
         Average cost to produce the solution & 17.7069s \\
         \hline
    \end{tabular}
    \caption{Burgers: Computational time for Hybrid Parareal using five cores, and initial condition $u_0(x)=x(1-x)$.}
    \label{tab:timeBurgersQuadratic}
    \vspace{-\baselineskip}
\end{table}

\begin{figure}[ht!]
\centering
\includegraphics[width=.7\textwidth]{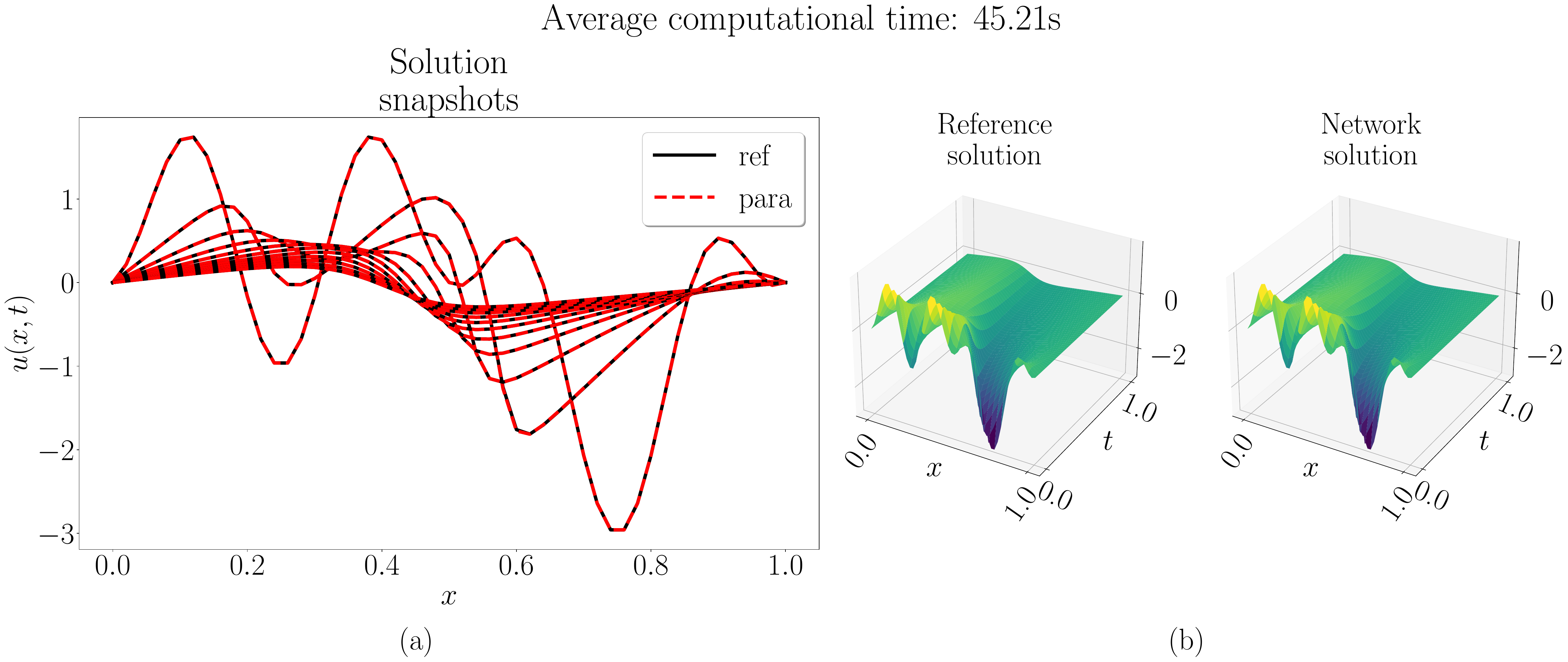}
\caption{Burgers: Snapshots of the solution obtained with Hybrid Parareal (left), comparison of the solution surfaces between Hybrid Parareal and the fine integrator applied serially (right). Solution corresponding to $u_0(x)=\sin{(2\pi x)} + \cos{(4\pi x)} - \cos{(8\pi x)} $.}
\label{fig:paraBurgersSumOfWaves}
\vspace{-\baselineskip}
\end{figure}

\begin{table}[ht!]
    \centering
    \begin{tabular}{|l||c|}
    \hline
         Timing breakdown & RPNN\\
         \hline\hline
         Average cost coarse step in the zeroth iterate &  0.3356s\\
         \hline
         Average cost to produce the solution & 45.2056s \\
         \hline
    \end{tabular}
    \caption{Burgers: Computational time for Hybrid Parareal using five cores, and initial condition $u_0(x)=\sin{(2\pi x)} + \cos{(4\pi x)} - \cos{(8\pi x)}$.}
    \label{tab:timeBurgersSumWaves}
    \vspace{-\baselineskip}
\end{table}
In this section, we report the simulation results for the Burgers' equation with two more initial conditions. The setup of the network and the partition of the time domain are the same as for the initial condition included in Section~\ref{se:burgers}. In Figure \ref{fig:paraBurgersQuadratic}, we work with the initial condition $u_0(x)=x(1-x)$, while in Figure \ref{fig:paraBurgersSumOfWaves} with $u_0(x)=\sin{(2\pi x)} + \cos{(4\pi x)} - \cos{(8\pi x)}$. The timings are included in Tables \ref{tab:timeBurgersQuadratic} and \ref{tab:timeBurgersSumWaves}, respectively. As expected, the time to obtain the full solution grows with the complexity of the initial condition. Indeed, there are about 10 seconds of difference between the fastest, corresponding to the quadratic initial condition in Figure \ref{fig:paraBurgersSumOfWaves}, to the second fastest, the one with $u_0(x)=\sin{(2\pi x)}$, and the slowest in Figure \ref{fig:paraBurgersSumOfWaves}. The reason behind this observed behavior is that, for more complicated solutions, the coarse predictions need to be corrected with the Parareal correction step more often, and the optimization problems to solve to get the coarse propagator get more expensive.

\end{appendices}

\end{document}